\documentclass[a4paper,leqno,11pt]{amsart}
\setlength{\textheight}{8.8in}
\setlength{\topmargin}{-.1in}
\parskip=.05in
\usepackage{amsfonts,amssymb,verbatim,amsmath,amsthm,latexsym,textcomp,amscd}
\usepackage{latexsym,amsfonts,amssymb,epsfig,verbatim}
\usepackage{amsmath,amsthm,amssymb,latexsym,graphics,textcomp}
\usepackage{graphicx}
\usepackage{color}
\usepackage{stackrel}
\usepackage{tikz}
\usepackage{url}
\usepackage{algpseudocode}
\usepackage{multirow}
\usepackage{enumerate}
\usepackage[mathscr]{euscript}
\usepackage{txfonts}
\usepackage{euscript}
\usepackage[all]{xy}
\usepackage{epic,eepic}\setlength{\unitlength}{.4mm}
\input xy
\xyoption{all}

\raggedbottom
\tolerance=9000
\hbadness=10000
\hfuzz=1.5pt
\setcounter{secnumdepth}{3}



\setlength{\textwidth}{5.3in}

\usepackage{hyperref}




\theoremstyle{plain}
\newtheorem{theorem}{Theorem}[section]
\newtheorem{thm}[theorem]{Theorem}
\newtheorem{lemma}[theorem]{Lemma}
\newtheorem{cor}[theorem]{Corollary}
\newtheorem{prop}[theorem]{Proposition}

\newtheorem{example}[theorem]{Example}
\newtheorem{remark}[theorem]{Remark}

\theoremstyle{definition}
\newtheorem{defn}[theorem]{Definition}

\newtheorem{exam}[theorem]{Example}



\begin{document}
\title{On certain classes of algebras in which centralizers are ideals}
\author{Ripan Saha}
\email{ripanjumaths@gmail.com}
\address{Department of Mathematics,
Raiganj University, Raiganj, 733134,
West Bengal, India.}

\author{David A. Towers}
\email{d.towers@lancaster.ac.uk}
\address{Department of Mathematics and Statistics,
Lancaster University, Lancaster LA1 4YF,
England.}

\subjclass[2010]{17A30, 17A32, 17B30.}
\keywords{Anti-commutative algebra; Anti-associative algebra; Lie algebra; Leibniz algebra; Centralizer; Nilpotent algebra.}
\begin{abstract}
This paper is primarily concerned with studying finite-dimensional anti-commutative nonassociative algebras in which every centralizer is an ideal. These are shown to be anti-associative and are classified over a general field $F$; in particular, they are nilpotent of class at most $3$ and metabelian. These results are then applied to show that a Leibniz algebra over a field of charactersitic zero in which all centralizers are ideals is solvable. 
\end{abstract}
\maketitle

\tableofcontents
\section{Introduction}
Centralizers in algebras have been studied in many papers, including \cite{bi}, \cite{DS19}, \cite{Gor17}, \cite{IP2010}, \cite{smr} (though this paper contains errors as we will point out below) and \cite{zap}. In particular, in \cite{DS19} the authors studied Leibniz algebras in which all of the centralizers are ideals. In this paper we will continue that study for other classes of algebras and answer one of the questions raised in that paper.
\par

Throughout, $A$ will denote a finite-dimensional nonassociative algebra over a field $F$. We will denote the product in $A$ by juxtaposition unless $A$ is a Lie or Leibniz algebra, in which case we will use the usual bracket notation, $[,]$. We will call $A$ \emph{anti-commutative} if $x^2=0$ for all $x\in A$; of course, in such an algebra $xy=-yx$ for all $x,y\in A$. Clearly, all Lie algebras are anti-commutative. In such an algebra, when specifying the non-zero products, we will only specify $xy$, leaving it assumed that $yx=-xy$
\par

The \emph{centralizer} of an element $x\in A$ is the set $$C_A(x)=\{y\in A \mid xy=yx=0 \hbox{ for all } y\in A\}.$$ Following \cite{DS19} we call $A$ a CL-algebra if every centralizer in $A$ is an ideal of $A$. We will say that elements $x, y\in A$ have {\em commutative bonding (CB)} if $xy=0$ implies that $(xz)y=0$ for all $z \in A$. The algebra $A$ is then defined to be a {\em CB-algebra} if every pair of elements of $A$ have commutative bonding. For anti-commutative algebras we will see that these two conditions are equivalent. It is easy to see that not all nonassociative algebras have the CB-property. The smallest such example is the two-dimensional solvable Lie algebra $L$ with basis $e_1,e_2$ and non-zero product $[e_1,e_2]=e_2$. Then $[e_1,e_1]=0$ but $[[e_1,e_2],e_1]=[e_2,e_1]=-e_2$. 
\par

The \emph{Frattini subalgebra}, $F(A)$, of $A$ is the intersection of the maximal subalgebras of $A$. The \emph{Frattini ideal}, $\phi(A)$, of $A$ is the biggest ideal contained in $F(A)$. If $\phi(A)=0$ we say that $A$ is \emph{$\phi$-free}. As pointed out above, there are errors in \cite{smr}. In particular, Proposition 3.4, which claims that a non-abelian Lie algebra $L$ with $\phi(L)\neq 0$ has only one maximal abelian subalgebra, is false. For example, let $L$ be the three-dimensional Heisenberg Lie algebra with basis $e_1,e_2,e_3$ and non-zero product $[e_1,e_2]=e_3$. Then $\phi(L)=L^2=Fe_3\neq 0$ and $Fe_1+Fe_3$, $Fe_2+Fe_3$ are maximal abelian subalgebras of $L$.
\par

 In the following four sections we will consider anti-commutative algebras. In section \ref{sec 2} we introduce some terminology and notation that we use throughout. In section \ref{sec 3} we introduce CB-elements and CB-algebras and show that anti-commutative algebras are CB-algebras if and only if they are anti-associative. It is also shown that anti-commutative algebras are CB-algebras if and only if they are CL-algebras and that the set of such CB-algebras is closed under subalgebras, factor algebras and direct sums.
\par

In section \ref{sec 4} we give a characterisation of all anti-commutative CB-algebras over a general field $F$. In particular, we show that they are all metabelian and nilpotent of class at most three. In section \ref{sec 5} we determine which of the nilpotent Lie algebras of dimension at most six are CB-algebras.
\par

In section \ref{sec 6} we consider the consequences of our earlier results for Leibniz CL-algebras. In particular, we show that, over a field of characteristic zero, all such algebras are solvable, thereby answering a question raised in \cite{DS19}. In the final section we consider group actions on algebras and show that CB-elements are preserved by such actions.
\par

We will denote the subspace spanned by $e_1, \ldots , e_n$ by $Fe_1+\ldots+Fe_n$. Algebra direct sums will be denoted by $\oplus$, whereas $\dot{+}$ will indicate a vector space direct sum.

\section{Preliminaries for anti-commutative algebras} \label{sec 2}
\begin{defn}
An ideal of an algebra $A$ is a subspace $I$ with the property that $IA\subseteq A$.
\end{defn}

Note that, as $A$ is anti-commutative, all ideals are two-sided. 

\begin{exam}
The center $Z(A) = \lbrace x \in A \mid xy=0,~\text{for all}~y\in A \rbrace $ of $A$ is an ideal of $A$.
\end{exam}
\begin{exam}
We define the subalgebras $A^k$ inductively by $A^2 = \text{span}\lbrace xy  \mid x, y\in A \rbrace$, $A^k=A^{k-1}A$ for all $k\geq 3$. Then a straightforward induction argument shows that $A^k\subseteq A^{k-1}$ for all $k\geq 2$, and $A^k$ is an ideal of $A$ for all $k\geq 1$.
\end{exam}
\begin{defn}\label{nilp} $A$ is said to be nilpotent of class $n$ if $A^{n+1}=0$ but $A^n\neq 0$.
\end{defn}
\begin{exam}
Similarly, we define the subalgebras $A^{(k)}$ inductively by $A^{(0)} = A$, $A^{(k)}=A^{(k-1)}A^{(k-1)}$ for all $k\geq 2$. Then a straightforward induction argument shows that $A^{(k)}\subseteq A^{(k-1)}$ for all $k\geq 1$, and $A^{(k)}$ is an subalgebra of $A$ for all $k\geq 1$, but may not be an ideal of $A$.
\end{exam}

\begin{defn}\label{solv} $A$ is said to be solvable if $A^{(n)}=0$ for some $n\geq 1$. If $A^{(2)}=0$ we say that $A$ is metabelian.
\end{defn}
\begin{defn}We define the centralizer of $x$ in $A$ as 
$$C_A(x) = \lbrace y\in A : xy = 0 \rbrace.$$ 
\end{defn}

\begin{defn} We say that $A$ is a CL-algebra if $C_A(x)$ is an ideal of $A$ for all $x\in A$.
\end{defn}

\section{Anti-commutative CB-algebras} \label{sec 3}
\begin{defn}\label{CB-bonding}
Two elements $x, y \in A$ are said to have commutative bonding if $xy=0$ imples $(xz)y=0$ for all $z \in A.$
\end{defn}

\begin{defn}
The anti-commutative algebra $A$ is called a CB-algebra if it satisfies the following property: whenever $x, y \in A$ are such that $xy=0$ then $(xz)y=0$ for all $z \in A.$
\end{defn}
\begin{example}
An algebra $A$ in which $A^2=0$ is automatically a CB-algebra.
\end{example}
\begin{example}
Any nilpotent algebra $A$ of class $2$ is a CB-algebra.
\end{example}

\begin{defn} We define the linear transformation $R_x : A \rightarrow A : a\mapsto ax$. An element $x\in L$ such that $R_x^2 = 0$ is called an absolute zero divisor.
\end{defn}

\begin{remark} In  the  early   1960's   Kostrikin   showed   that   absolute zero divisors  play  a  special  and  very  important role in the theory  of  Lie  algebras over fields of prime characteristic. Since   Lie   algebras   containing  absolute   zero divisors   have   a  degenerate  Killing   form, Kostrikin  called   them  algebras   with  strong  degeneration.
\end{remark}

\begin{thm}\label{equiv} The following are equivalent:
\begin{itemize}
\item[(i)] $A$ is a CB-algebra;
\item[(ii)] every element of $A$ is an absolute zero divisor;
\item[(iii)] $A$ is anti-associative.
\end{itemize}
\end{thm}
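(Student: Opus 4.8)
The plan is to prove the cycle of implications $(i)\Rightarrow(ii)\Rightarrow(iii)\Rightarrow(i)$; each step reduces to elementary manipulation of the anti-commutativity identity $xy=-yx$ (equivalently $x^2=0$) together with a single polarization, so the statement will need neither a hypothesis on the field nor finite-dimensionality. For $(i)\Rightarrow(ii)$, I would fix $x\in A$ and feed the pair $(x,x)$ into the CB-property: since $A$ is anti-commutative, $xx=x^2=0$, so the hypothesis $xy=0$ of Definition~\ref{CB-bonding} is met with $y=x$, and the conclusion is $(xz)x=0$ for every $z\in A$. Rewriting $(xz)x=-(zx)x=-R_x^2(z)$ via anti-commutativity shows $R_x^2=0$, i.e. every element of $A$ is an absolute zero divisor.

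For $(ii)\Rightarrow(iii)$, I would start from $(zx)x=0$ for all $x,z\in A$ and polarize in $x$: replacing $x$ by $x+y$, expanding by bilinearity, and discarding the two pure terms $(zx)x$ and $(zy)y$, one is left with
$$(zx)y+(zy)x=0 \qquad \text{for all } x,y,z\in A.$$
Since this polarization is a pure substitution, not a division by $2$, it is valid in every characteristic. Specializing $(z,x,y)=(b,c,a)$ gives $(bc)a=-(ba)c=(ab)c$ (the last equality because $ba=-ab$), while anti-commutativity also gives $(bc)a=-a(bc)$; combining the two yields $(ab)c=-a(bc)$, which is the anti-associativity identity.

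For $(iii)\Rightarrow(i)$, assume $A$ is anti-associative and that $x,y\in A$ satisfy $xy=0$, so also $yx=0$. For any $z\in A$, anti-commutativity followed by anti-associativity gives
$$(xz)y=-y(xz)=(yx)z=0,$$
so $A$ is a CB-algebra, which closes the cycle.

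I do not expect a genuine obstacle here: the entire content lies in choosing the correct specialization of the polarized identity and in bookkeeping the signs produced by anti-commutativity. The one point I would take care to spell out is that ``$R_x^2=0$ for all $x$'' and the bilinear identity $(zx)y+(zy)x=0$ are interderivable by substitution alone, so that the equivalence of $(i)$, $(ii)$, $(iii)$ persists even in characteristic two; it may also be worth noting in passing that the polarized identity says precisely that the operators $R_x$ pairwise anti-commute.
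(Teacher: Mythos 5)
Your proposal is correct and follows essentially the same route as the paper: the same cycle $(i)\Rightarrow(ii)\Rightarrow(iii)\Rightarrow(i)$, with $(ii)\Rightarrow(iii)$ obtained by the same polarization (the paper expands $((x+z)y)(x+z)$, which is a relabeling of your substitution) and an identical computation for $(iii)\Rightarrow(i)$. Your added remarks that the polarization involves no division by $2$ and that the polarized identity says the operators $R_x$ pairwise anti-commute are accurate but not needed.
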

\begin{proof} 
$(i) \Rightarrow (ii)$: Let $x,y\in A$. Then $x^2=0$, so $0=-(xy)x=(yx)x$.  As this is true for all $x,y\in A$ we have $R_x^2=0$ for all $x\in A$, so every element of $A$ is an absolute zero divisor, giving (ii). 
\par

\noindent $(ii) \Rightarrow (iii)$: Let $x,y,z$ be arbitrary elements of $A$. Then $(x y)x =0$. Hence
\begin{align*} 0=((x+z)y)(x+z) & = (xy)x+(xy)z+(zy)x+(zy)z  \\
 & = (xy)z+(zy)x. \nonumber 
\end{align*} It follows that $(xy)z=-x(yz)$ and $A$ is anti-associative.
\par

\noindent $(iii) \Rightarrow (i)$: Suppose that $xy=0$. Then $$(xz)y=-y(xz)=(yx)z=-(xy)z=0.$$ Thus, $A$ is a CB-Lie algebra.
\end{proof}

\begin{cor} 
The set of finite-dimensional CB-algebras form a pseudo-variety; that is, they are closed under subalgebras, factor algebras and direct sums.
\end{cor}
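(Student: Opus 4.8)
The plan is to use the characterisation from Theorem~\ref{equiv}, which says that being a CB-algebra is equivalent to being anti-associative, i.e.\ satisfying $(xy)z=-x(yz)$ for all $x,y,z$; equivalently, by the first part of the proof, every element is an absolute zero divisor, $R_x^2=0$. Both the anti-associative identity and the anti-commutative identity $x^2=0$ are \emph{multilinear (polynomial) identities}, so membership in the class is determined by identities, and it is a standard fact that a class of algebras defined by a set of identities is closed under subalgebras, quotients, and (finite) direct products. I would nonetheless spell out the three closure properties directly, since the whole argument is short.

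First, for subalgebras: if $A$ is a CB-algebra and $B\le A$ is a subalgebra, then $B$ is anti-commutative (as $x^2=0$ for all $x\in A$, in particular for $x\in B$) and for $x,y,z\in B\subseteq A$ we have $(xy)z=-x(yz)$ computed in $A$, but all the products $xy$, $yz$, $(xy)z$, $x(yz)$ lie in $B$, so the identity holds in $B$; hence $B$ is anti-associative, so a CB-algebra by Theorem~\ref{equiv}. Second, for factor algebras: if $I\trianglelefteq A$ and $\pi\colon A\to A/I$ is the quotient map, then $\pi$ is a surjective algebra homomorphism, and applying $\pi$ to $x^2=0$ and to $(xy)z+x(yz)=0$ shows that $\overline{x}^2=0$ and $(\overline{x}\,\overline{y})\overline{z}+\overline{x}(\overline{y}\,\overline{z})=0$ for all $\overline{x},\overline{y},\overline{z}\in A/I$; so $A/I$ is anti-commutative and anti-associative, hence a CB-algebra. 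Third, for direct sums $A\oplus A'$ of two CB-algebras (the general finite case following by induction): the product is componentwise, so $(a,a')^2=(a^2,a'^2)=(0,0)$, and similarly the anti-associative identity holds coordinatewise because it holds in each factor; thus $A\oplus A'$ is an anti-commutative anti-associative algebra, so a CB-algebra.

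Since the paper has already defined a CB-algebra to be finite-dimensional, one should just note that subalgebras, quotients, and finite direct sums of finite-dimensional algebras are again finite-dimensional, so the class really is a pseudo-variety in the stated sense. I do not anticipate any genuine obstacle here: the only mild subtlety is that one must remember to verify the anti-commutative condition $x^2=0$ in each case as well (not just the anti-associative identity), and to observe that all intermediate products in the subalgebra computation stay inside the subalgebra so that the identity inherited from $A$ is genuinely an identity of $B$; everything then reduces to the trivial observation that multilinear identities pass to sub\-objects, quotients, and products.
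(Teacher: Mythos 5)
Your proposal is correct and follows exactly the paper's route: both reduce the claim via Theorem~\ref{equiv} to the fact that the defining identities (anti-commutativity and anti-associativity) are preserved by subalgebras, quotients and direct sums; the paper simply declares this ``clear'' while you spell out the verification. No gap, nothing further needed.
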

\begin{proof} It is clear that anti-associativity is preserved under the taking of subalgebras, factor algebras and direct sums.
\end{proof}

\begin{thm}
The anti-commutative algebra $A$ is a CL-algebra if and only if it is a CB-algebra.
\end{thm}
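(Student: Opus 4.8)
\emph{Proof proposal.} The plan is to observe that, in an anti-commutative algebra, the CB-condition and the CL-condition are essentially two phrasings of the same closure property of centralizers, so the argument is pure unwinding of definitions. Throughout I would use freely the following elementary facts about an anti-commutative algebra $A$: (a) $xy=0$ if and only if $yx=0$, so $y\in C_A(x)$ if and only if $x\in C_A(y)$; (b) each $C_A(x)$ is a subspace of $A$; and (c) since $A$ is anti-commutative, a subspace $I$ with $IA\subseteq I$ automatically satisfies $AI\subseteq I$, hence is a (two-sided) ideal.

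For the implication ``CB $\Rightarrow$ CL'', fix $x\in A$ and take $y\in C_A(x)$, i.e.\ $xy=0$, hence $yx=0$ by (a). Applying the CB-property to the pair $(y,x)$ yields $(yz)x=0$ for every $z\in A$, and by (a) again this says $x(yz)=0$, i.e.\ $yz\in C_A(x)$. Thus $C_A(x)A\subseteq C_A(x)$, so by (b) and (c) the centralizer $C_A(x)$ is an ideal, and $A$ is a CL-algebra. (One could equally route this through Theorem~\ref{equiv}: a CB-algebra is anti-associative, so $x(yz)=-(xy)z=0$ once $xy=0$; but the direct route above does not even need that.)

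For the converse ``CL $\Rightarrow$ CB'', suppose every centralizer is an ideal and take $x,y\in A$ with $xy=0$. By (a) this means $x\in C_A(y)$, and since $C_A(y)$ is an ideal we get $xz\in C_A(y)$ for all $z\in A$, i.e.\ $(xz)y=0$. Hence every pair of elements has commutative bonding and $A$ is a CB-algebra.

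I do not anticipate any genuine obstacle here; the statement is at the level of a definition-chase. The only point requiring care is invoking anti-commutativity at the right moments — to pass between $xy=0$ and $yx=0$, to identify ``$y$ centralizes $x$'' with ``$x$ centralizes $y$'', and to upgrade ``closed under right multiplication by $A$'' to ``ideal'' — together with the trivial check that $C_A(x)$ is a subspace, which is what makes ``ideal'' meaningful in the first place.
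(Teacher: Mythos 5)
Your proof is correct and follows essentially the same route as the paper: for CL $\Rightarrow$ CB you use that $xy=0$ puts $x\in C_A(y)$ and ideality gives $xz\in C_A(y)$, and for CB $\Rightarrow$ CL you apply commutative bonding to the pair $(y,x)$ to get $yz\in C_A(x)$ — which is exactly the step the paper leaves as ``clearly follows from the definition,'' spelled out. No issues.
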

\begin{proof}
Suppose $C_A(x)$ is an ideal of $A$ for all $x\in A$.
Let for some $x, y \in A$, $xy=0$. This implies that $x \in C_A(y)$. Now for any $z\in A$,  $xz\in C_A(y)$ as $C_A(y)$ is an ideal of $A$.
Thus, $(xz)y=0$. Therefore, $A$ is a CB-algebra.

Conversely, supoose $A$ is a CB-algebra. We need to show $C_A(x)$ is an ideal of $A$ for all $x\in A$. Let $y \in C_A(x)$ and $z\in A$. We need to verify $yz \in C_A(x)$. This clearly follows from the definition of CB-algebra.
\end{proof}

\begin{defn}
An element $z \in A$ is said to have the CB-property if $(xz)y=0$ for all $x \in A$ and $y\in C_A(x)$. We will call such an element a CB-element.
\end{defn}

\begin{remark}
Note that CB-elements are those elements of the algebra which do not break the commutativity between any two elements. For example, $0$ is a CB-element. All the elements in a CB-algebra are CB-elements.
\end{remark}
\begin{lemma} \label{CB-element and anti-ass}
If $z \in A$ be a CB-element then $x(zy)= -(xz)y$ for all $x, y\in A$.
\end{lemma}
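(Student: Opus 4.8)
The plan is to run, for the single CB-element $z$, the same polarisation argument that proves $(ii)\Rightarrow(iii)$ in Theorem~\ref{equiv}. The first step is to extract from the CB-property of $z$ the ``absolute zero divisor type'' identity $(xz)x=0$ for all $x\in A$: since $A$ is anti-commutative we have $x^2=0$, hence $x\in C_A(x)$, and the defining property of a CB-element (applied to this $x$ and to $y=x\in C_A(x)$) gives exactly $(xz)x=0$.

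Next I would polarise this identity. Replacing $x$ by $x+w$ and using bilinearity of the product,
\[
0=((x+w)z)(x+w)=(xz)x+(xz)w+(wz)x+(wz)w .
\]
The outer two terms vanish by the identity just established (the first with $x$, the last with $w$), so we obtain $(xz)w=-(wz)x$ for all $x,w\in A$.

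Finally I would feed this back into what we want, using only anti-commutativity. Taking $w=y$ gives $(xz)y=-(yz)x$; since $yz=-zy$ this equals $(zy)x$, and since the product is anti-commutative $(zy)x=-x(zy)$. Chaining these equalities yields $(xz)y=-x(zy)$, i.e. $x(zy)=-(xz)y$, which is the claim (equivalently, the anti-associator $(xz)y+x(zy)$ vanishes whenever the middle factor is a CB-element).

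I do not expect a genuine obstacle here; the only point that needs a moment's care is that the polarisation step invokes the CB-property of $z$ at the two \emph{separate} elements $x$ and $w$ in order to kill $(xz)x$ and $(wz)w$, which is legitimate precisely because $z$ is a CB-element of the whole algebra rather than of one fixed pair. It is worth remarking that when every element of $A$ is a CB-element this argument recovers the implication $(ii)\Rightarrow(iii)$ of Theorem~\ref{equiv}, so the lemma is the expected ``elementwise'' refinement.
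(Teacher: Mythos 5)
Your proof is correct and takes essentially the same route as the paper: both first extract $(xz)x=0$ for all $x$ from the CB-property of $z$ (your explicit justification via $x\in C_A(x)$ is a detail the paper leaves implicit), then polarise $((x+y)z)(x+y)=0$ and finish with anti-commutativity. No issues.
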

\begin{proof}
If $z\in A$ is a CB-element then $(xz)x=0$ for all $x \in A$. Observe that
\begin{align*}
0=((x+y)z)(x+y)& = (xz)x + (yz)x + (xz)y + (yz)y = (yz)x + (xz)y.
\end{align*}
Thus, we have $x(zy)= -(xz)y$.
\end{proof}
\begin{lemma} \label{CB-element and cent}
Let $x \in A,~y \in C_A(x)$. Then $z \in A$ is a CB-element if and only if $zy \in C_A(x)$.
\end{lemma}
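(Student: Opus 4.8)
The plan is to establish the two implications separately, reading the biconditional as the assertion that $z$ is a CB-element if and only if $zy\in C_A(x)$ holds for \emph{every} $x\in A$ and \emph{every} $y\in C_A(x)$ (the displayed $x$ and $y$ being universally quantified; note that for a single fixed pair the statement would fail, e.g. when $x=0$).

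\emph{Forward direction.} Assume $z$ is a CB-element and fix $x\in A$ and $y\in C_A(x)$; the goal is $x(zy)=0$, which by anti-commutativity is the same as $zy\in C_A(x)$. Here Lemma \ref{CB-element and anti-ass} does the work: it gives $x(zy)=-(xz)y$, and the right-hand side vanishes since $y\in C_A(x)$ and $z$ has the CB-property, i.e. $(xz)y=0$ by definition. So this direction is immediate once Lemma \ref{CB-element and anti-ass} is available. (Alternatively, one can run the argument of the converse below: since $x\in C_A(y)$, the CB-property of $z$ applied to the pair $(y,x)$ gives $(yz)x=0$, whence $(zy)x=-(yz)x=0$.)

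\emph{Converse.} Assume $zy\in C_A(x)$ whenever $y\in C_A(x)$, and let $x\in A$, $y\in C_A(x)$ be arbitrary; I must show $(xz)y=0$. The crucial point is that anti-commutativity makes the relation $y\in C_A(x)$ symmetric, since $xy=0\Leftrightarrow yx=0$, so $x\in C_A(y)$. Applying the hypothesis with the roles of $x$ and $y$ interchanged yields $zx\in C_A(y)$, i.e. $(zx)y=0$; since $zx=-xz$ this is precisely $(xz)y=0$, as required. This half uses only anti-commutativity, not Lemma \ref{CB-element and anti-ass}.

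The one place I would slow down is the converse. The natural temptation is to try to simplify $(xz)y$ directly using an anti-associativity-type identity, but that is circular: the identity of Lemma \ref{CB-element and anti-ass} is only licensed once $z$ is known to be a CB-element. Swapping $x$ and $y$ via the symmetry of centralizer membership sidesteps this entirely, so there is no genuine obstacle here — the only real cautions are to fix the quantifier structure of the statement at the outset and to resist the circular move.
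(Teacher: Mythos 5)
Your proof is correct and follows essentially the same route as the paper's: the forward direction via Lemma \ref{CB-element and anti-ass} together with the definition of the CB-property, and the converse by interchanging the roles of $x$ and $y$ using the symmetry of centralizer membership under anti-commutativity. Your explicit observation that $x$ and $y$ must be read as universally quantified is a point the paper leaves implicit but silently relies on in its ``interchanging the role of $x$ and $y$'' step.
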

\begin{proof}
Let $z \in A$ be a CB-element. For all $x \in A$ and $y \in C_A(x)$, we have
\begin{align*}
(zy)x =-x(zy)=(xz)y = 0,
\end{align*} using anti-commutativity and Lemma \ref{CB-element and anti-ass}.
Thus, $zy\in C_A(x)$.

Conversely, suppose $zy \in C_A(x)$ for some $z\in A$. Interchanging the role of $x$ and $y$, we have $zx \in C_A(y)$. Observe that 
\begin{align*}
(xz)y =0.
\end{align*}
Therefore, $z$ is a CB-element.
\end{proof}
\begin{prop}
The collection $K$ of all CB-elements is a  subalgebra of $A$. Thus, $K$ is a CB- algebra.
\end{prop}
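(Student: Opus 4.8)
The plan is to check directly that $K$ is a subspace of $A$ that is closed under the product. Closure under linear combinations is immediate from bilinearity of the multiplication: if $z_1,z_2\in K$ and $\lambda\in F$, then for every $x\in A$ and every $y\in C_A(x)$ we have $(x(\lambda z_1+z_2))y=\lambda(xz_1)y+(xz_2)y=0$, so $\lambda z_1+z_2\in K$. The real content is that $z_1z_2\in K$ whenever $z_1,z_2\in K$.

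To prove this, fix $z_1,z_2\in K$, an element $x\in A$, and $y\in C_A(x)$; the goal is to show $(x(z_1z_2))y=0$. The strategy is to shuffle the parentheses using Lemma~\ref{CB-element and anti-ass} twice. First, since $z_1$ is a CB-element, that lemma (with $z=z_1$, and $y$ there replaced by $z_2$) gives $x(z_1z_2)=-(xz_1)z_2$. Next, since $z_2$ is a CB-element, the same lemma (with $z=z_2$, and $x$ there replaced by $xz_1$) gives $(xz_1)(z_2y)=-((xz_1)z_2)y$. Combining these two identities yields $(x(z_1z_2))y=(xz_1)(z_2y)$.

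It then remains to see that the right-hand side vanishes. Because $z_2$ is a CB-element and $y\in C_A(x)$, Lemma~\ref{CB-element and cent} tells us that $z_2y\in C_A(x)$. Now apply the defining property of the CB-element $z_1$ with base element $x$ and centralizer element $z_2y\in C_A(x)$: this gives exactly $(xz_1)(z_2y)=0$. Hence $(x(z_1z_2))y=0$ for all $x\in A$ and all $y\in C_A(x)$, so $z_1z_2$ is a CB-element and $K$ is a subalgebra of $A$. Finally, $K$ is anti-commutative as a subalgebra of the anti-commutative algebra $A$, and if $x,y\in K$ satisfy $xy=0$ then $y\in C_A(x)$, so for every $z\in K$ the CB-property of $z$ forces $(xz)y=0$; thus $K$ satisfies the defining axiom of a CB-algebra (equivalently, by Theorem~\ref{equiv}, $K$ is anti-associative).

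I expect the only delicate point to be the parenthesis-shuffling in the second paragraph: one must apply Lemma~\ref{CB-element and anti-ass} to the correct CB-element sitting in the correct middle slot, and keep track of the signs, so that $(x(z_1z_2))y$ is brought into the shape $(xz_1)(z_2y)$ — the only shape to which the CB-property of $z_1$ applies, once $z_2y$ has been recognised as an element of $C_A(x)$ via Lemma~\ref{CB-element and cent}.
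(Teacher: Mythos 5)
Your proof is correct and follows essentially the same route as the paper's: both reduce closure of $K$ under multiplication to Lemma \ref{CB-element and anti-ass} and Lemma \ref{CB-element and cent}, applied to $z_1$ and $z_2$. The only (cosmetic) difference is that you finish by verifying the defining identity $(x(z_1z_2))y=0$ directly from the CB-property of $z_1$, whereas the paper shows $(z_1z_2)y\in C_A(x)$ and then invokes the converse direction of Lemma \ref{CB-element and cent}; your version is, if anything, marginally cleaner since it sidesteps the ``for some/for all'' quantifier subtlety in that converse.
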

\begin{proof}
As $0\in K$, the set $K$ is non-empty; it is clearly a subspace of $A$. Let $z_1, z_2 \in K$ and $y \in C_A(x)$. We need to show that $z=z_1 z_2 \in K$.

As $z_1, z_2$ are CB-elements, using Lemma (\ref{CB-element and anti-ass}) and  (\ref{CB-element and cent}) for $z_1$ and $z_2$, we have
$$z y = (z_1 z_2)y = -z_1(z_2 y) \in C_A(x).$$
Again using the Lemma \ref{CB-element and cent} for $z$, we get $z= z_1 z_2$ is a CB-element. Thus, the collection $K$ of CB-elements is a  subalgebra, and $K$ is a CB-algebra.
\end{proof}

\begin{prop}
Let $A_1$ and $A_2$ be non-associative algebras with $x^2=0$ for all $x \in A_1, A_2$ and $\phi : A_1 \to A_2$ be a homomorphism. If $z$ is a CB-element of $A_1$ then $\phi(z)$ is a CB-element of $\phi(A_1)$.
\end{prop}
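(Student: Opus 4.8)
The plan is to argue straight from the definition of a CB-element. Write $B=\phi(A_1)$; since $\phi$ is a homomorphism and $A_2$ is anti-commutative, $B$ is an anti-commutative subalgebra of $A_2$ and $\phi(z)\in B$. To prove $\phi(z)$ is a CB-element of $B$ I must show $(x'\phi(z))y'=0$ whenever $x'\in B$ and $y'\in C_B(x')$. Fix such $x',y'$ and choose preimages $x,y\in A_1$ with $\phi(x)=x'$ and $\phi(y)=y'$. The condition $y'\in C_B(x')$ means $x'y'=0$, i.e.\ $\phi(xy)=0$, while the quantity to be killed is $(x'\phi(z))y'=\phi\bigl((xz)y\bigr)$; so it suffices to prove $(xz)y\in\ker\phi$.

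If $xy$ were literally $0$ we would be done at once: then $y\in C_{A_1}(x)$, hence $(xz)y=0$ because $z$ is a CB-element of $A_1$, and applying $\phi$ gives the claim. The entire difficulty therefore lies in upgrading $\phi(xy)=0$ to the conclusion, i.e.\ in showing that the CB-property descends to the quotient $A_1/\ker\phi\cong B$: one needs $xy\in\ker\phi\Rightarrow(xz)y\in\ker\phi$. The first thing I would try is to adjust the preimages. Since $\ker\phi$ is an ideal, $y$ may be replaced by $y-k$ with $k\in\ker\phi$; if $k$ can be chosen so that $xk=xy$, then $x(y-k)=0$ and $\phi(y-k)=y'$, and the previous paragraph finishes the proof. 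This reduces everything to whether $xy$ lies in $x\cdot\ker\phi$, or, allowing $x$ to be perturbed as well, whether $xy=xi_2+i_1y-i_1i_2$ for some $i_1,i_2\in\ker\phi$.

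For that reduction I expect to need more than the bare definitions: Lemma~\ref{CB-element and anti-ass}, which gives $(xz)y=-x(zy)$ for the CB-element $z$, combined with the ideal property of $\ker\phi$, and perhaps the description of $z$ as an absolute zero divisor lying in the CB-subalgebra $K$ of all CB-elements (on which, by Theorem~\ref{equiv}, the product is anti-associative). I would rewrite $(xz)y$ via these identities and try to express it, modulo $\ker\phi$, in terms of $xy$, $z(xy)$, and the like, all of which lie in $\ker\phi$. The main obstacle — and the step I am least confident survives for a wholly arbitrary homomorphism — is exactly this descent: there is no purely formal reason that $xy\in\ker\phi$ should force $(xz)y\in\ker\phi$. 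It is immediate when $\phi$ is injective, since then $B\cong A_1$, so the real content is what happens modulo a nontrivial kernel; I would test the claim on small anti-commutative algebras before committing to the argument, and if it fails there I would look for the extra hypothesis on $\phi$ under which the proposition is correct.
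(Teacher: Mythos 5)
You have put your finger on exactly the right issue, and your instinct not to trust the ``descent'' step is correct: the step fails, and the proposition as stated is false. Note first that the paper's own proof commits precisely the error you isolate: in the chain $(x_2z_2)y_2=\phi\bigl((x_1z_1)y_1\bigr)=\phi(0)$, the final equality needs $(x_1z_1)y_1=0$ in $A_1$, which the CB-property of $z_1$ delivers only if $x_1y_1=0$; but $y_2\in C_{\phi(A_1)}(x_2)$ gives only $x_1y_1\in\Ker\phi$. No choice of preimages or massaging via Lemma \ref{CB-element and anti-ass} can close this, because a counterexample exists. Let $A_1$ have basis $e_1,e_2,e_3,f_{12},f_{13},f_{23},g$ with nonzero products (anti-commutativity understood) $e_1e_2=f_{12}$, $e_1e_3=f_{13}$, $e_2e_3=f_{23}$, $f_{13}e_2=g$, $f_{23}e_1=-g$, and put $z=e_3$. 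For arbitrary $x,y$ with $e_i$-coefficients $\alpha_i,\beta_i$ one computes $xz=\alpha_1f_{13}+\alpha_2f_{23}$ and hence $(xz)y=(\alpha_1\beta_2-\alpha_2\beta_1)g$, while the $f_{12}$-component of $xy$ is $(\alpha_1\beta_2-\alpha_2\beta_1)f_{12}$; so $xy=0$ forces $(xz)y=0$ and $z$ is a CB-element of $A_1$ (though $A_1$ is not a CB-algebra). But $I=Ff_{12}$ is an ideal, and in the quotient $\phi:A_1\to A_1/I$ we have $\bar e_1\bar e_2=0$ yet $(\bar e_1\bar z)\bar e_2=\bar g\neq 0$, so $\phi(z)$ is not a CB-element of $\phi(A_1)$.

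The two special cases you flag are exactly the ones that survive. If $\phi$ is injective the claim is immediate. If $A_1$ is a Lie algebra over a field of characteristic different from two, then combining the identity $[x,[z,y]]=-[[x,z],y]$ of Lemma \ref{CB-element and anti-ass} with the Jacobi identity gives $2[[x,z],y]=[[x,y],z]$, so $[x,y]\in\Ker\phi$ does force $[[x,z],y]\in\Ker\phi$ because $\Ker\phi$ is an ideal; this is the descent you were looking for, and it is a genuinely Lie-theoretic phenomenon rather than a formal one. (The closure of CB-\emph{algebras} under quotients is unaffected, since that is proved via anti-associativity, which does pass to quotients; it is individual CB-elements of a non-CB-algebra that need not survive a homomorphism.)
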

\begin{proof}
Let $z_1 \in A_1$ be a CB-element and $z_2= \phi(z_1)$. Suppose $x_2 \in \phi(A_1)$ and $y_2 \in C_{\phi(A_1)}(x_2)$, so $x_2 = \phi (x_1)$ and $y_2 = \phi(y_1)$ for some $x_1, y_1 \in A_1$. Then
\begin{align*}
(x_2 z_2) y_2 & = (\phi(x_1) \phi (z_1)) \phi(y_1) \\
                           & = \phi (x_1 z_1) \phi(y_1) \\
                           & = \phi ((x_1 z_1) y_1) \\
                           & = \phi(0)=0.
\end{align*}
Therefore, $z_2 = \phi(z_1)$ is a CB-element in $\phi(A_1)$.
\end{proof}

\begin{cor}
Let $A_1$ and $A_2$ be non-associative algebras with $x^2=0$ for all $x \in A_1, A_2$ and $\phi : A_1 \to A_2$ be an isomorphism. If $A_1$ is a CB-algebra then so is $A_2$.
\end{cor}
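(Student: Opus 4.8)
The plan is to deduce the statement immediately from the preceding proposition on homomorphisms. First observe that since $x^2=0$ for all $x\in A_2$, the algebra $A_2$ is anti-commutative, so it makes sense to ask whether it is a CB-algebra. Recall also, from the remark following the definition of a CB-element, that in a CB-algebra \emph{every} element is a CB-element; this is the characterisation of CB-algebras that the proof will exploit, rather than the original ``$xy=0\Rightarrow(xz)y=0$'' formulation.

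Now I would argue as follows. Let $y_2\in A_2$ be arbitrary. Since $\phi$ is an isomorphism it is in particular surjective, so $\phi(A_1)=A_2$ and there exists $y_1\in A_1$ with $\phi(y_1)=y_2$. Because $A_1$ is a CB-algebra, $y_1$ is a CB-element of $A_1$, and hence by the preceding proposition $y_2=\phi(y_1)$ is a CB-element of $\phi(A_1)=A_2$. As $y_2$ was arbitrary, every element of $A_2$ is a CB-element, which is exactly the assertion that $A_2$ is a CB-algebra.

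The only point requiring any care is the identification $\phi(A_1)=A_2$, which is where surjectivity of the isomorphism enters; without it the previous proposition would only give that the image $\phi(A_1)$ is a CB-algebra. So there is no genuine obstacle here: the real content is carried by the preceding proposition, and the corollary is just the observation that an isomorphism is onto. (Alternatively, one could route through Theorem~\ref{equiv}: $A_1$ being a CB-algebra is equivalent to $A_1$ being anti-associative, anti-associativity is an algebra identity and hence transported along the isomorphism $\phi$, so $A_2$ is anti-associative; since $A_2$ is anti-commutative, Theorem~\ref{equiv} then gives that $A_2$ is a CB-algebra.)
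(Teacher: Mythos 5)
Your argument is correct and is exactly the intended one: the paper states this corollary with no proof, treating it as immediate from the preceding proposition once one notes that an isomorphism is surjective, so $\phi(A_1)=A_2$ and every element of $A_2$ is the image of a CB-element. Your observation that ``every element is a CB-element'' characterises CB-algebras (not just the forward implication stated in the paper's remark) is the one small point worth making explicit, and you handle it correctly.
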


\section{Classification of anti-commutative CB-algebras} \label{sec 4}
First we show that all anti-commutative CB-algebras are metabelian and nilpotent of index at most three.
\begin{thm}\label{nilp} Let $A$ be a CB-algebra over any field $F$. If $F$ has characteristic different from two then $A^4=0$. Moreover, $A$ is metabelian (that is, $A^{(2)}=0$). If $A$ is a Lie algebra and $F$ has characteristic different from three then $A^3=0$. If $A$ is an associative algebra and $F$ has characteristic different from two then $A^3=0$.
\end{thm}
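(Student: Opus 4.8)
The main tool is Theorem \ref{equiv}: every CB-algebra is anti-commutative \emph{and} anti-associative, so throughout the proof I would use the associator identity $(xy)z=-x(yz)$ together with $xy=-yx$. The plan is to show that beyond length three, re-bracketing a product is ``incoherent''. Since $A^{4}$ is spanned by the elements $((xy)z)w$, it suffices to prove each of these vanishes when $\mathrm{char}\,F\neq2$. I would compute $((xy)z)w$ in two ways, using only anti-associativity. Re-associating from the outside: $((xy)z)w=-(xy)(zw)=x\bigl(y(zw)\bigr)$ (two applications). Re-associating from the inside, replacing the innermost factor $(xy)z$ by $-x(yz)$ and then shifting the brackets: $((xy)z)w=-x\bigl(y(zw)\bigr)$ (three applications). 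Comparing the two normal forms gives $2\,x\bigl(y(zw)\bigr)=0$, hence $((xy)z)w=0$ in characteristic $\neq2$, so $A^{4}=0$.

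The remaining assertions are then short. For metabelianness, a single application of anti-associativity gives $(xy)(zw)=-\bigl((xy)z\bigr)w\in A^{3}A=A^{4}$, so $A^{(2)}=A^{2}A^{2}\subseteq A^{4}$, which is $0$ by the previous step. For the Lie case, combining $(xy)z=-x(yz)$ with anti-commutativity yields $(xy)z=(yz)x$, and cycling the variables, $(xy)z=(yz)x=(zx)y$; feeding this into the Jacobi identity $(xy)z+(yz)x+(zx)y=0$ gives $3(xy)z=0$, so $A^{3}=A^{2}A=0$ once $\mathrm{char}\,F\neq3$. For the associative case, associativity gives $(xy)z=x(yz)$, which together with $(xy)z=-x(yz)$ forces $2(xy)z=0$, hence $A^{3}=0$ when $\mathrm{char}\,F\neq2$.

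The proof involves no real computational difficulty; the one place to be careful is the sign bookkeeping in the two re-associations of a length-four word, since the argument works precisely because the two bracketings disagree by a sign (rather than agreeing and yielding no relation) -- this is the failure of a Mac Lane--type coherence for anti-associativity, and it is what forces the products in question to be $2$-torsion. It is also why the hypothesis $\mathrm{char}\,F\neq2$ cannot be dropped from the $A^{4}=0$ statement: in characteristic $2$ an anti-commutative anti-associative algebra is nothing but a commutative associative algebra in which every element squares to zero, and such algebras can have arbitrarily large nilpotency class.
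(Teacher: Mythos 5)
Your proof is correct and follows essentially the same route as the paper: invoke Theorem \ref{equiv} to get anti-associativity, compare the two re-associations of $((xy)z)w$ to obtain $2x(y(zw))=0$, and then use the Jacobi identity (giving $3(xy)z=0$) and associativity (giving $2(xy)z=0$) for the special cases. The only difference is cosmetic --- you make the metabelian step $(xy)(zw)=-((xy)z)w\in A^4$ explicit, which the paper leaves implicit in its computation.
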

\begin{proof} Let $x,y,z,w$ be arbitrary elements of $A$. Then $A$ is anti-associative, by Proposition \ref{equiv} and so
 $(xy)z=-x(yz)$.
\par

Now
\[ ((xy)z)w = -(x(yz))w=x((yz)w)=-x(y(zw)).
\] But also
\[   ((xy)z)w =-(xy)(zw)=x(y(zw)).
\]
Hence, if $F$ has characteristic different from two, we have that $x(y(zw)) =0$.
\par

Now suppose that $A$ satisfies the Jacobi identity. Then
\begin{align*}
0=x(yz)+y(zx)+z(xy) & =x(yz)-(zx)y-(xy)z \\
 & =x(yz)+(xz)y+x(yz)=2x(yz)-x(zy)=3x(yz).
\end{align*}
If $F$ has characteristic different from three this implies that $x(yz)=0$, whence $A^3=0$.
\par

If $A$ is associative, then $x(yz)=-(xy)z=(xy)z$ which implies that $x(yz)=0$ if $F$ has characteristic different from two.
\end{proof}

\begin{prop}\label{codim2} Let $A$ be a CB-algebra with $\dim (A/A^2)=2$. Then $A^3=0$.
\end{prop}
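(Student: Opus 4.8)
The approach is to extract from the hypothesis the single structural identity $A^2=Fab+A^3$ for a suitable choice of $a,b$, and then to deduce $A^3=0$ by short manipulations with anti-commutativity and anti-associativity (the latter available by Theorem~\ref{equiv}).

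First I would choose $a,b\in A$ whose cosets span $A/A^2$, so that $A=Fa+Fb+A^2$. To see that $A^2=Fab+A^3$, take arbitrary $u,v\in A$, write $u=\alpha_1 a+\beta_1 b+c_1$ and $v=\alpha_2 a+\beta_2 b+c_2$ with $c_1,c_2\in A^2$, and expand $uv$. Using $a^2=b^2=0$ the $aa$- and $bb$-terms vanish, the $ab$- and $ba$-terms combine (anti-commutativity) into a scalar multiple of $ab$, the mixed terms $ac_i,\,c_ib,\dots$ lie in $AA^2$, and the term $c_1c_2$ lies in $A^2A^2$. Since $AA^2=A^2A=A^3$ by anti-commutativity and $A^2A^2\subseteq A^2A=A^3$, we conclude $uv\in Fab+A^3$; as $A^2$ is spanned by such products, $A^2\subseteq Fab+A^3$, and the reverse inclusion is obvious.

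Next I would compute $A^3=A^2A=(Fab+A^3)A=(ab)A+A^3A=(ab)A+A^4$. Anti-associativity gives $(ab)b=-a(bb)=0$ and $(ab)a=-a(ba)=a(ab)=-(aa)b=0$, so $(ab)A=F(ab)a+F(ab)b+(ab)A^2=(ab)A^2$. Moreover, for $w=\sum_i c_id_i\in A^2$ anti-associativity gives $(ab)w=\sum_i(ab)(c_id_i)=-\sum_i\bigl((ab)c_i\bigr)d_i$, and each $(ab)c_i$ lies in $A^2A^2\subseteq A^3$, so $(ab)w\in A^3A=A^4$. Hence $(ab)A^2\subseteq A^4$, and therefore $A^3=(ab)A+A^4\subseteq A^4$; since $A^4\subseteq A^3$ always, $A^3=A^4$. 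As $A$ is nilpotent --- by Theorem~\ref{nilp} when $\operatorname{char}F\neq2$, and because $A$ is then a finite-dimensional commutative associative nil algebra (every element squaring to $0$) when $\operatorname{char}F=2$ --- the chain $A\supseteq A^2\supseteq A^3\supseteq\cdots$ terminates at $0$, and $A^3=A^4=\cdots$ forces $A^3=0$.

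I expect the only real subtlety to be the characteristic-two case: there Theorem~\ref{nilp} says nothing, so one must observe separately that an anti-commutative anti-associative algebra is then commutative and associative with every element nilpotent, and invoke the classical fact that a finite-dimensional nil associative algebra is nilpotent, in order to close the chain. When $\operatorname{char}F\neq2$ the argument is even shorter, since Theorem~\ref{nilp} already supplies $A^4=0$ and $A^2A^2=A^{(2)}=0$, whence $(ab)A^2=0$ and $A^3=(ab)A+A^4=0$ immediately. The genuinely new input in all cases is the identity $A^2=Fab+A^3$, which is exactly where the hypothesis $\dim(A/A^2)=2$ enters; after that everything is routine.
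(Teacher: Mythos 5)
Your argument is correct, and it rests on the same two pillars as the paper's proof --- the decomposition of $A$ by two generators modulo $A^2$, and the vanishing $(ab)a=(ab)b=0$ supplied by anti-associativity --- but you run it forwards (establishing $A^2=Fab+A^3$, hence $A^3=A^4$, and closing the chain by nilpotency) where the paper argues by contradiction: it assumes $A^3\neq 0$, picks $xy\in A^2\setminus Z(A)$, observes $A=Fx+Fy+A^2$, and uses metabelianness together with $x(xy)=y(xy)=0$ to force $xy\in Z(A)$. One genuine difference is in your favour: the proposition carries no characteristic hypothesis, yet the paper's proof quietly invokes Theorem~\ref{nilp} ($A^4=0$ and $A^{(2)}=0$), which is only proved for characteristic $\neq 2$; you close that gap by noting that in characteristic two an anti-commutative anti-associative algebra is a commutative associative nil algebra, hence nilpotent, so $A^3=A^4$ still collapses to $0$. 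One cosmetic slip: in showing $(ab)A^2\subseteq A^4$ you write that $(ab)c_i$ lies in $A^2A^2$; since $c_i$ is merely an element of $A$, this should read $(ab)c_i\in A^2A=A^3$, which is what you actually use, so nothing is affected.
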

\begin{proof} Suppose that $A^3\neq 0$. Then there exist $x,y\in A$ such that $xy\in A^2\setminus Z(A)$. If either $x$ or $y$ is in $A^2$ we have $xy\in A^3\subseteq Z(A)$, since $A^4=0$, by Proposition \ref{nilp}. Hence $A=Fx+Fy+A^2$. Now $x(xy)=y(xy)=0$ implies that $A(xy)=0$, since $A$ is metabelian, by Proposition \ref{nilp} again. It follows that either $x(xy)\neq 0$ or $y(xy)\neq 0$, both of which imply that $A$ is not a CB-algebra, a contradiction. The result follows.
\end{proof}

\begin{defn} A nilpotent Lie algebra $L$ of dimension $n$ is called filiform if $\dim L^i = n-i$ for each $i\geq 2$. 
\end{defn}

Filiform Lie algebras are nilpotent Lie algebras with maximal nilindex: a filiform Lie algebra $L$ of dimension $n$ has $L^n=0$. They were introduced by Vergne in \cite{Ver} and have attracted much attention since then as they have important properties; in particular, every filiform Lie algebra can be obtained by a deformation of the model filiform algebra. However, very few are Lie CB-algebras as the following corollary shows.

\begin{cor} If $L$ is a filiform Lie CB-algebra then $L$ is two- or three-dimensional abelian or the three-dimensional Heisenberg algebra.
\end{cor}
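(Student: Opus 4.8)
The plan is to establish the dimension bound $\dim L\le 3$ from the results already available, and then to identify $L$ from the short list of nilpotent Lie algebras of that size.

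So let $L$ be a filiform Lie CB-algebra with $\dim L=n$. If $n\le 1$ then $L$ is abelian and there is nothing to prove, so assume $n\ge 2$. The first step is to show $L^3=0$. When $F$ has characteristic different from three this is immediate from Theorem~\ref{nilp}. When $F$ has characteristic three, observe that filiformity gives $\dim L^2=n-2$, hence $\dim(L/L^2)=n-(n-2)=2$, which is exactly the hypothesis of Proposition~\ref{codim2}; since the proof of that proposition uses only that the characteristic is not two, it applies here and yields $L^3=0$. Thus $L^3=0$ in all characteristics.

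On the other hand, being filiform, $L$ satisfies $L^n=0$ and $L^{n-1}\ne 0$, so its nilpotency class is exactly $n-1$. Combined with $L^3=0$ this forces $n-1\le 2$, i.e.\ $n\le 3$. It remains to go through the cases $n\le 3$. Every nilpotent Lie algebra of dimension at most three is abelian or is the three-dimensional Heisenberg algebra, and in each case the nilpotency class is at most two, so the algebra is automatically a CB-algebra by the examples in Section~\ref{sec 3}. Imposing the filiform condition $\dim L^i=n-i$ then leaves the two-dimensional abelian algebra and the three-dimensional Heisenberg algebra (together with the three-dimensional abelian algebra, if, as in the statement, the abelian algebra is regarded as filiform in each dimension); this is exactly the list claimed.

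There is no real obstacle here: the corollary is a quick consequence of Proposition~\ref{codim2} and Theorem~\ref{nilp}. The one point worth a moment's care is the case split on the characteristic of $F$ --- Theorem~\ref{nilp} gives $L^3=0$ outright except in characteristic three, where the sharper Proposition~\ref{codim2} is needed, and it applies precisely because a filiform algebra has $\dim(L/L^2)=2$. Conceptually, the content of the corollary is just that filiformity pins $\dim(L/L^2)$ to $2$, which is exactly the regime in which the CB-property forces nilpotency class at most two.
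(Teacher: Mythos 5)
Your argument is correct and follows essentially the same route as the paper: filiformity gives $\dim(L/L^2)=2$, Proposition~\ref{codim2} then yields $L^3=0$, which together with the filiform dimension condition forces $\dim L\le 3$, and the short list of nilpotent Lie algebras in dimension at most three finishes the proof. Your extra care over characteristic three is a reasonable addition (the paper glosses over it), though note that your claim ``$L^3=0$ in all characteristics'' still inherits the paper's implicit exclusion of characteristic two, since Proposition~\ref{codim2} rests on Theorem~\ref{nilp}, which assumes $\operatorname{char}F\neq 2$.
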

\begin{proof} If $L$ is filiform then $\dim (L/L^2)=2$. It follows from Proposition \ref{codim2} that $\dim L\leq 3$. Hence $L$ is two- or three-dimensional abelian or the three-dimensional Heisenberg algebra.
\end{proof}
\par

In order to classify CB-algebras we follow the ideas in \cite[Theorem 2.4(a)]{ks}. Let $A$ be a CB-algebra, let $B$ be a subspace of $A^2$ which is complementary to $Z(A)$ and let $C$ be a subspace of $A$ which is complementary to $A^2$, so that $A=(Z(A)\dot{+} B)\dot{+} C$. Choose a basis $\{ e_i,\ldots , e_r\}$ for $C$ and put
\begin{itemize}
\item[(1)] $e_i e_j = e_{ij}+z_{ij}$ and $e_i e_{jk} = z_{ijk}$, where $e_{ij}\in B$, $z_{ij}, z_{ijk}\in Z(A)$ for $1\leq i,j,k\leq r$.
\item[(2)] $B^2=0$ (since $A$ is metabelian) and $AZ(A)=0$.
\item[(3)] $e_i^2=0$, $e_{ij} = - e_{ji}$, $z_{ij} = -z_{ji}$ (by anticommutativity) and, for all permutations $\sigma \in S_3$, 
\[ z_{\sigma(i)\sigma(j)\sigma(k)} = sign(\sigma)z_{ijk}
\] (by anti-associativity).
\item[(4)] The set $\{e_{ij} \mid 1\leq i,j\leq r\}$ span $B$, since $e_1,\ldots, e_r$ are the generators of $A$ and $AB\subseteq Z(A)$.
\item[(5)] $\sum \lambda_{jk} e_{ij}=0 \Rightarrow \sum \lambda_{jk} z_{ijk}=0$ for all $1\leq i\leq r$ ($\lambda_{jk}\in F$).
\item[(6)] If $x\in B+C$ then there is $y\in C$ such that $xy\neq 0$.
\end{itemize}

Conversely, if we have three subspaces, $Z,A,B$ such that  $A=(Z\dot{+} B)\dot{+} C$ with $\{ e_i,\ldots , e_r\}$ a basis for $C$ and satisfying (1)-(6) then $A$ is a well-defined algebra, $x^2=0$ for all $x\in A$ and $A$ is anti-associative, by (1), (2), (3). It follows from Theorem \ref{equiv} that $A$ is a CB-algebra.
\par

We have proved the following result.

\begin{thm} An algebra $A$ is a CB-algebra if and only if  it has three subspaces $Z,B,C$ such that $A=(Z\dot{+} B)\dot{+} C$ and satisfying (1)-(6) above.
\end{thm}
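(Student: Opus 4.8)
The statement essentially packages the construction carried out immediately above, so a complete proof only has to record the routine verifications in each direction. For the forward implication, let $A$ be a CB-algebra. By Theorem~\ref{equiv} it is anti-associative, and by Theorem~\ref{nilp} it is metabelian with $A^4=0$; in particular $A\,A^3=A^4=0$, so $A^3\subseteq Z(A)$. Since $A$ is metabelian and anti-commutative, $A^2=CC+CA^2$, whence $A^2=CC+A^3$; iterating once more, using $C(CA^2)\subseteq A\,A^3=0$, gives $A^2=CC+C(CC)$, i.e. $A^2$ is the span of the products $y_1y_2$ and $y_1(y_2y_3)$ with $y_i\in C$. Now choose $B$ complementary to $A^2\cap Z(A)$ in $A^2$ and $C$ complementary to $A^2+Z(A)$ in $A$, so that $A=(Z(A)\,\dot{+}\,B)\,\dot{+}\,C$; fix a basis $e_1,\dots,e_r$ of $C$ and let $(1)$ define the constants $e_{ij},z_{ij},z_{ijk}$. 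Then I would verify $(2)$--$(6)$ in turn. Item $(2)$ is immediate from $B\subseteq A^2$, $A^{(2)}=0$ and the definition of the centre. For $(3)$, anti-commutativity gives $e_i^2=0$, $e_{ij}=-e_{ji}$ and $z_{ij}=-z_{ji}$, while $z_{\sigma(i)\sigma(j)\sigma(k)}=\mathrm{sign}(\sigma)z_{ijk}$ follows by evaluating the two generating transpositions of $S_3$ using $e_{ij}=-e_{ji}$, anti-associativity $(ab)c=-a(bc)$ and $Z(A)A=0$. For $(4)$, $A^2=CC+A^3$ with $A^3\subseteq Z(A)$ shows that the classes $e_ie_j\equiv e_{ij}\pmod{Z(A)}$ span $B$. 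For $(5)$, left-multiplying a relation $\sum\lambda_{jk}e_{jk}=0$ by a basis vector $e_i$ and reducing via $(1)$--$(2)$ produces exactly $\sum\lambda_{jk}z_{ijk}=0$. For $(6)$, if $x\in A$ kills every element of $C$ then, by anti-associativity, $x(y_1y_2)=-(xy_1)y_2=0$ and $x(y_1(y_2y_3))=0$ for all $y_i\in C$, so $xA^2=0$; together with $xZ(A)=0$ this forces $x\in Z(A)$, and since $(B\,\dot{+}\,C)\cap Z(A)=0$ no nonzero element of $B\,\dot{+}\,C$ centralises $C$.

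For the converse, given subspaces $Z,B,C$ with a basis $e_1,\dots,e_r$ of $C$ and constants obeying $(1)$--$(6)$, I would set $A=Z\,\dot{+}\,B\,\dot{+}\,C$ with the bilinear product determined by $(1)$ together with $B^2=0$ and $AZ=0$ from $(2)$; here condition $(5)$ is exactly what makes the induced product $C\times B\to Z$ well defined despite the relations among the spanning set $\{e_{ij}\}$ of $B$. Then $x^2=0$ for all $x\in A$ is read off from $e_i^2=0$, $B^2=0$, $AZ=0$ and the antisymmetry in $(3)$, and anti-associativity $(ab)c=-a(bc)$ reduces on basis elements to the $S_3$-sign rule of $(3)$ and the vanishing relations of $(2)$; Theorem~\ref{equiv} then yields that $A$ is a CB-algebra. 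Conditions $(4)$ and $(6)$ are not needed for this last step; their role is to force $Z$ to be exactly the centre of the constructed algebra and $B,C$ the intended complements, so that the correspondence is the expected one.

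I expect the main obstacle to be organisational rather than conceptual: one has to keep careful track of which products land in $Z(A)$, in $B$, or in $C$, and to verify the well-definedness clause $(5)$ carefully. Everything else reduces to a direct application of Theorems~\ref{equiv} and~\ref{nilp}.
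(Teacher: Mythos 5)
Your proof is correct and follows essentially the same route as the paper, which records exactly this construction: conditions (1)--(6) are extracted from a CB-algebra via Theorem \ref{equiv} (anti-associativity) and Theorem \ref{nilp} ($A^4=0$, metabelian, hence $A^3\subseteq Z(A)$), and conversely (1)--(3) give $x^2=0$ and anti-associativity, so Theorem \ref{equiv} applies. Your write-up is in fact somewhat more careful than the paper's sketch --- e.g.\ in taking $C$ complementary to $A^2+Z(A)$ so that the decomposition $A=(Z\dot{+}B)\dot{+}C$ genuinely holds, and in isolating (5) as the well-definedness condition for the product $C\times B\to Z$ --- but the underlying argument is identical.
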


\begin{exam}\label{seven} The smallest example of a CB-algebra such that $A^3\neq 0$ will be seven-dimensional in which $C$ is spanned by $e_1,e_2,e_3$, $B$ is spanned by $e_1e_2, e_1e_3, e_2e_3$ and $Z$ is spanned by $e_1(e_2e_3)$. If we denote $e_1e_2$ by $e_4$, $e_1e_3$ by $e_5$, $e_2e_3$ by $e_6$ and $e_1(e_2e_3)$ by $e_7$, this has multiplication
\begin{align*} e_1 e_2 &= e_4 & e_1e_3 &= e_5 &e_2e_3 &= e_6 \\
e_1e_6 &= e_7 & e_2e_5 &= -e_7 & e_3e_4 &= e_7
\end{align*}
Notice that this is Lie algebra if and only if $F$ has characteristic three, and is an associative algebra if and only if $F$ has characteristic two.
\end{exam}

\section{Low dimensional Lie CB-algebras} \label{sec 5}
Here we look at the nilpotent Lie algebras of dimension less than or equal to six to see which of them are CB-algebras. We use the classification over a field of characteristic different from two given in \cite{GR06}, and will employ the same notation as there. For the reader's convenience we list the algebras here. Throughout, $I$ will denote a one-dimensional ideal of $L$.
\begin{prop} Nilpotent Lie algebras of dimensions one or two are abelian; in dimension three there are two non-isomorphic algebras, $L_{3,1}$, which is abelian, and the Heisenberg algebra $L_{3,2}$ with $[e_1,e_2]=e_3$. All of these are CB-algebras.
\end{prop}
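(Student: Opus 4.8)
The plan is to split the statement into two independent pieces: the classification input (which isomorphism types of nilpotent Lie algebras occur in dimensions $\le 3$) and the CB-verification. For the first piece I would simply quote the relevant entries from \cite{GR06}: in dimensions one and two every nilpotent Lie algebra is abelian, and in dimension three there are exactly the two types $L_{3,1}$ (abelian) and the Heisenberg algebra $L_{3,2}$ with $[e_1,e_2]=e_3$. If one prefers to be self-contained, the one- and two-dimensional cases are immediate: a $1$-dimensional Lie algebra is trivially abelian, and the only non-abelian $2$-dimensional Lie algebra has $[e_1,e_2]=e_2$, which satisfies $L^k=Fe_2\neq 0$ for all $k$ and so is not nilpotent.

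For the CB-verification I would invoke the sufficient conditions already recorded in Section \ref{sec 3}. Every abelian algebra has $L^2=0$, hence is a CB-algebra by the first example following Definition \ref{CB-bonding}; this disposes of $L_{3,1}$ and the one- and two-dimensional algebras at once. For $L_{3,2}$ the key point is that it is nilpotent of class $2$: one has $L^2=Fe_3$ and $[e_3,L]=0$, so $L^3=0$ while $L^2\neq 0$. By the second example following Definition \ref{CB-bonding}, nilpotent algebras of class $2$ are CB-algebras, so $L_{3,2}$ is a CB-algebra. Alternatively, one can check the defining property directly: if $[x,y]=0$ then for any $z\in L_{3,2}$ we have $[x,z]\in L^2\subseteq Z(L_{3,2})$, whence $[[x,z],y]=0$, which is exactly the CB-condition.

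There is no real obstacle here; the proposition is the base case of the dimension-by-dimension survey of Section \ref{sec 5}, and all the conceptual work is already contained in the general theory of Section \ref{sec 3} (in particular Theorem \ref{equiv} and the two elementary examples on $A^2=0$ and class-$2$ nilpotency). The only things to be careful about are citing the low-dimensional classification of \cite{GR06} accurately and observing that both ``abelian'' and ``nilpotent of class $2$'' are already-known sufficient conditions for the CB-property.
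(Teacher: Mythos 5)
Your proposal is correct and matches the paper's argument, which simply observes that all of these algebras satisfy $L^3=0$ and are therefore CB-algebras by the elementary examples of Section \ref{sec 3}. Your write-up just spells out the same reasoning in more detail (separating the $L^2=0$ and class-$2$ cases and adding an optional direct check for $L_{3,2}$).
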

\begin{proof} These all have $L^3=0$.
\end{proof}

\begin{prop} In dimension $4$ there are three non-isomorphic nilpotent Lie algebras:
\begin{itemize}
\item $L_{4,1}=L_{3,1}\oplus I$;
\item $L_{4,2}=L_{3,2}\oplus I$; and
\item $L_{4,3}$ with non-zero products $[e_1,e_2]=e_3$, $[e_1,e_3]=e_4$.
\end{itemize} 
Of these, $L_{4,1}$ and $L_{4,2}$ are CB-Lie algebras, but $L_{4,3}$  is not.
\end{prop}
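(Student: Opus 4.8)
The plan is to treat the three algebras in turn, the first two being immediate consequences of earlier observations and the third needing one short argument.

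For $L_{4,1} = L_{3,1}\oplus I$: since $L_{3,1}$ is abelian and $I$ is one-dimensional, $L_{4,1}$ is abelian, so $L_{4,1}^2 = 0$ and it is a CB-algebra by the remark that any algebra with $A^2 = 0$ is one. For $L_{4,2} = L_{3,2}\oplus I$: the Heisenberg algebra $L_{3,2}$ is nilpotent of class two, hence a CB-algebra, and $I$ is abelian, hence a CB-algebra; since the class of CB-algebras is closed under direct sums, $L_{4,2}$ is a CB-algebra. (One could equally just observe that $L_{4,2}^3 = 0$, so $L_{4,2}$ is nilpotent of class at most two.)

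The one case requiring an argument is $L_{4,3}$, with $[e_1,e_2] = e_3$, $[e_1,e_3] = e_4$. Here $L_{4,3}^2 = Fe_3 + Fe_4$, so $\dim(L_{4,3}/L_{4,3}^2) = 2$, while $L_{4,3}^3 = [L_{4,3}^2, L_{4,3}] = F[e_3,e_1] = Fe_4 \neq 0$. If $L_{4,3}$ were a CB-algebra, Proposition \ref{codim2} would force $L_{4,3}^3 = 0$, a contradiction; hence $L_{4,3}$ is not a CB-algebra. Alternatively, I would exhibit the failure of the absolute-zero-divisor criterion of Theorem \ref{equiv} directly, via $R_{e_1}^2(e_2) = [[e_2,e_1],e_1] = [-e_3,e_1] = e_4 \neq 0$, so that $e_1$ is not an absolute zero divisor; or one may note that $L_{4,3}$ is a four-dimensional filiform Lie algebra and invoke the corollary excluding those.

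There is no substantive obstacle here: the only point needing care is recognising that $L_{4,3}$ is covered by Proposition \ref{codim2} (equivalently, that it is filiform of dimension four); the remaining verifications are routine computations with the lower central series.
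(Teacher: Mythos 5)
Your proof is correct and, in its alternative form, coincides with the paper's own argument: the paper disposes of $L_{4,1}$ and $L_{4,2}$ by noting $L^3=0$, and of $L_{4,3}$ by the computation $[e_1,[e_1,e_2]]=e_4\neq 0$, which is exactly your absolute-zero-divisor check $R_{e_1}^2(e_2)\neq 0$. Your primary route via Proposition \ref{codim2} (or the filiform corollary) is a valid, slightly heavier alternative, but nothing is gained over the one-line direct computation.
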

\begin{proof} The first two have $L^3=0$. The third has $[e_1,[e_1,e_2]]\neq 0$, so this is not a CB-algebra.
\end{proof}

\begin{prop} In dimension $5$ there are nine non-isomorphic nilpotent Lie algebras:
\begin{itemize}
\item $L_{5,k}=L_{4,k}\oplus I$ for $k=1,2,3$;
\item $L_{5,4}$: $[e_1,e_2]=e_5, [e_3,e_4]=e_5$;
\item $L_{5,5}$: $[e_1,e_2]=e_3, [e_1,e_3]=e_5, [e_2,e_4]=e_5$;
\item $L_{5,6}$: $[e_1,e_2]=e_3, [e_1,e_3]=e_4, [e_1,e_4]=e_5, [e_2,e_3]=e_5$;
\item $L_{5,7}$: $[e_1,e_2]=e_3, [e_1,e_3]=e_4, [e_1,e_4]=e_5$;
\item $L_{5,8}$: $[e_1,e_2]=e_4$, $[e_1,e_3]=e_5$; and
\item $L_{5,9}$: $[e_1,e_2]=e_3, [e_1,e_3]=e_4, [e_2,e_3]=e_5$.
\end{itemize}
Of these, $L_{5,1}, L_{5,2}, L_{5,4}$ and $L_{5,8}$ are CB-Lie algebras, but the others are not.
\end{prop}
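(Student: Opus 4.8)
The plan is to apply the criterion of Theorem~\ref{equiv}: a Lie algebra $L$ is a CB-algebra precisely when every element of $L$ is an absolute zero divisor, that is, $[[a,x],x]=0$ for all $a,x\in L$. Together with the observation in Section~\ref{sec 3} that every nilpotent algebra of class at most two is a CB-algebra, this splits the proposition into two routine kinds of verification: for the four algebras claimed to be CB it suffices to see that $L^3=0$, and for each of the remaining five it suffices to produce a single pair $a,x$ with $[[a,x],x]\neq 0$.

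For the positive cases, $L_{5,1}=L_{3,1}\oplus I\oplus I$ is abelian, so $L^2=0$; and in each of $L_{5,2}=L_{3,2}\oplus I\oplus I$, $L_{5,4}$ and $L_{5,8}$ the listed products show at once that $L^2$ is central (being spanned, respectively, by $e_3$, by $e_5$, and by $e_4$ and $e_5$), so $L^3=[L^2,L]=0$. Hence all four are nilpotent of class at most two, and therefore CB-algebras.

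For the negative cases $L_{5,3},L_{5,5},L_{5,6},L_{5,7},L_{5,9}$ I would uniformly take $x=e_1$ and $a=e_2$. In each of these algebras $[e_1,e_2]=e_3$ and $[e_1,e_3]$ is a nonzero basis vector, equal to $e_4$ in $L_{5,3}$ (reading off $L_{5,3}=L_{4,3}\oplus I$ from the products of $L_{4,3}$), $L_{5,6}$, $L_{5,7}$ and $L_{5,9}$, and equal to $e_5$ in $L_{5,5}$; hence $[[e_2,e_1],e_1]=[-e_3,e_1]=[e_1,e_3]\neq 0$. Thus $e_1$ is not an absolute zero divisor, and by Theorem~\ref{equiv} none of these five algebras is a CB-algebra. (For $L_{5,6},L_{5,7},L_{5,9}$ one could instead invoke Proposition~\ref{codim2}, since there $\dim(L/L^2)=2$ and $L^3\neq 0$; the direct computation has the merit of handling all five cases at once.)

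I do not anticipate a real obstacle: the whole argument is a finite inspection of the nine multiplication tables. The only points needing a little care are the bookkeeping in the positive cases, namely checking that the stated spanning sets of $L^2$ genuinely lie in $Z(L)$, and the remark that the identity $[[e_2,e_1],e_1]=[e_1,e_3]$ is valid over any field, so that the only hypothesis used is that $F$ has characteristic different from $2$, as already assumed for the classification of \cite{GR06}.
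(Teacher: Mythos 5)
Your proposal is correct and follows essentially the same route as the paper: the four CB cases are disposed of by noting $L^3=0$, and the remaining five by exhibiting $[e_1,[e_1,e_2]]=[e_1,e_3]\neq 0$ (the paper handles $L_{5,3}$ by citing that its subalgebra $L_{4,3}$ is not CB, but that amounts to the same computation). No gaps.
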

\begin{proof} $L_{5,1}, L_{5,2}, L_{5,4}$ and $L_{5,8}$ all have $L^3=0$. 
\par

$L_{5,3}$ is not a CB-Lie algebra because $L_{4,3}$ isn't one. 
\par

$L_{5,5}$, $L_{5,6}$, $L_{5,7}$ and $L_{5,9}$ all have $[e_1, [e_1,e_2]]\neq 0$, and so they are not CB-Lie algebras.
\end{proof}

\begin{prop} In dimension $6$ we get the following nilpotent Lie algebras:
\begin{itemize}
\item $L_{6,k}=L_{5,k}\oplus I$ for $k=1,\ldots,9$;
\item $L_{6,10}$:  $[e_1,e_2]=e_3, [e_1,e_3]=e_6, [e_4,e_5]=e_6$;
\item $L_{6,11}$:  $[e_1,e_2]=e_3, [e_1,e_3]=e_4, [e_1,e_4]=e_6, [e_2,e_3]=e_6, [e_2,e_5]=e_6$;
\item $L_{6,12}$:  $[e_1,e_2]=e_3, [e_1,e_3]=e_4, [e_1,e_4]=e_6, [e_2,e_5]=e_6$;
\item $L_{6,13}$:  $[e_1,e_2]=e_3, [e_1,e_3]=e_5, [e_2,e_4]=e_5, [e_1,e_5]=e_6, [e_3,e_4]=e_6$;
\item $L_{6,14}$:  $[e_1,e_2]=e_3, [e_1,e_3]=e_4, [e_1,e_4]=e_5, [e_2,e_3]=e_5, [e_2,e_5]=e_6$, $[e_3,e_4]=-e_6$;
\item $L_{6,15}$:  $[e_1,e_2]=e_3, [e_1,e_3]=e_4, [e_1,e_4]=e_5, [e_2,e_3]=e_5, [e_1,e_5]=e_6$, $[e_2,e_4]=e_6$;
\item $L_{6,16}$:  $[e_1,e_2]=e_3, [e_1,e_3]=e_4, [e_1,e_4]=e_5, [e_2,e_5]=e_6, [e_3,e_4]=-e_6$;
\item $L_{6,17}$:  $[e_1,e_2]=e_3, [e_1,e_3]=e_4, [e_1,e_4]=e_5, [e_1,e_5]=e_6, [e_2,e_3]=e_6$:
\item $L_{6,18}$:  $[e_1,e_2]=e_3, [e_1,e_3]=e_4, [e_1,e_4]=e_5, [e_1,e_5]=e_6$:
\item $L_{6,19}(\epsilon)$:  $[e_1,e_2]=e_4, [e_1,e_3]=e_5, [e_2,e_4]=e_6, [e_3,e_5]=\epsilon e_6$;
\item $L_{6,20}$:  $[e_1,e_2]=e_4, [e_1,e_3]=e_5, [e_1,e_5]=e_6, [e_2,e_4]=e_6$;
\item $L_{6,21}(\epsilon)$:  $[e_1,e_2]=e_3, [e_1,e_3]=e_4, [e_2,e_3]=e_5, [e_1,e_4]=e_6, [e_2,e_5]=\epsilon e_6$;
\item $L_{6,22}(\epsilon)$:  $[e_1,e_2]=e_5, [e_1,e_3]=e_6, [e_2,e_4]=\epsilon e_6, [e_3,e_4]= e_5$;
\item $L_{6,23}$: $[e_1,e_2]=e_3, [e_1,e_3]=e_5, [e_1,e_4]=e_6, [e_2,e_4]=e_5$;
\item $L_{6,24}(\epsilon)$:  $[e_1,e_2]=e_3, [e_1,e_3]=e_5, [e_1,e_4]=\epsilon e_6, [e_2,e_3]= e_6, [e_2,e_4]=e_5$;
\item $L_{6,25}$: $[e_1,e_2]=e_3, [e_1,e_3]=e_5, [e_1,e_4]=e_6$; and
\item $L_{6,26}$: $[e_1,e_2]=e_4, [e_1,e_3]=e_5, [e_2,e_3]=e_6$,
\end{itemize}
where $\epsilon \in F$. Of these, $L_{6,1}, L_{6,2}, L_{6,4}, L_{6,8}, L_{6,22}(\epsilon)$ and $L_{6,26}$ are CB-Lie algebras, but the others are not.
\end{prop}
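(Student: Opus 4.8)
The plan is to treat the list of $26$ six-dimensional nilpotent Lie algebras by splitting it into the cheap cases and the genuine cases, exactly as was done in dimensions $4$ and $5$. First I would dispose of the CB-algebras: for each of $L_{6,1}, L_{6,2}, L_{6,4}, L_{6,8}$ one checks directly from the displayed brackets that $L^3=0$, and hence these are CB-algebras by the examples following Definition \ref{CB-bonding} (nilpotent of class $\le 2$ implies CB). For $L_{6,26}$, $L^2$ is spanned by $e_4,e_5,e_6$ and each of these is central, so again $L^3=0$. The only nontrivial CB-algebra on the list is $L_{6,22}(\epsilon)$: here $L^2 = Fe_5 + Fe_6$, $L^3 = Fe_6$, and $L^4 = 0$; one must verify anti-associativity of the bracket, i.e. $[[x,y],z] = -[x,[y,z]]$ for all $x,y,z$, which by trilinearity reduces to checking it on basis triples from $e_1,e_2,e_3,e_4$, and by Theorem \ref{equiv} this is equivalent to the CB-property. (Since $L_{6,22}(\epsilon)$ is a Lie algebra, this anti-associativity forces, on any nonzero triple product, that the characteristic is three; but $L^3 = Fe_6$ is one-dimensional and the verification goes through because the Jacobi relations among $e_5,e_6$ are vacuous — $[e_i,e_5]$ and $[e_i,e_6]$ vanish for all $i$ since $e_5,e_6 \in Z(L)$.)

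For the algebras that are \emph{not} CB-algebras, the cleanest route is to exhibit, for each one, a single witness $x \in L$ with $R_x^2 \neq 0$, i.e. an absolute zero divisor failure, contradicting (ii) of Theorem \ref{equiv}. In almost every case on the list one has $[e_1,[e_1,e_2]] \neq 0$: indeed whenever the brackets contain both $[e_1,e_2]=e_3$ and $[e_1,e_3] = (\text{nonzero})$ — which covers $L_{6,10}$ through $L_{6,18}$, $L_{6,21}(\epsilon)$, $L_{6,23}$, $L_{6,24}(\epsilon)$, $L_{6,25}$ — the element $e_1$ is not an absolute zero divisor. The remaining non-CB cases are handled by the reduction $L_{6,k} = L_{5,k}\oplus I$ for $k = 3,5,6,7,9$: a direct sum is a CB-algebra iff each summand is (by the Corollary after Theorem \ref{equiv}, or just because anti-associativity is inherited and reflected by direct summands), and $L_{5,3},L_{5,5},L_{5,6},L_{5,7},L_{5,9}$ were already shown not to be CB-algebras in the dimension-five proposition. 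For $L_{6,19}(\epsilon)$ one uses $[e_1,e_2]=e_4$ and $[e_2,e_4]=e_6$, so $[e_2,[e_2,e_1]] = -[e_2,e_4] = -e_6 \neq 0$; for $L_{6,20}$, $[e_1,e_2]=e_4$ and $[e_2,e_4]=e_6$ give the same conclusion with $e_2$ in the role of the witness.

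The main obstacle, such as it is, is purely bookkeeping: one must make sure that for every one of the $26$ algebras exactly one of the two verdicts is justified, and in particular that the borderline algebra $L_{6,22}(\epsilon)$ really does satisfy anti-associativity on \emph{all} basis triples and not merely on the ones appearing in the structure constants — the triples that produce $0$ on both sides must be confirmed to produce $0$, which is where a careless check could go wrong. I expect the $L_{6,22}(\epsilon)$ verification to be the only place requiring genuine (if short) computation; everything else reduces either to observing $L^3=0$, to pointing at a failure of $R_x^2=0$ for $x=e_1$ or $x=e_2$, or to quoting the dimension-five result through the direct-sum decomposition $L_{6,k}=L_{5,k}\oplus I$.
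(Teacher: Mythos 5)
Your overall structure matches the paper's proof exactly: the six CB-algebras are identified by showing $L^3=0$, the bulk of the non-CB cases are killed by the witness $[e_1,[e_1,e_2]]\neq 0$, the cases $k=3,5,6,7,9$ are reduced to the dimension-five result via the direct-sum decomposition, and $L_{6,19}(\epsilon)$, $L_{6,20}$ are handled with $e_2$ as the witness (the paper writes this as $[[e_1,e_2],e_2]\neq 0$, which is the same thing). All of those verdicts check out against the structure constants.

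The one genuine error is your treatment of $L_{6,22}(\epsilon)$, which you single out as ``the only place requiring genuine computation.'' You assert $L^3=Fe_6$ and $L^4=0$, but this is a miscalculation: the products are $[e_1,e_2]=e_5$, $[e_1,e_3]=e_6$, $[e_2,e_4]=\epsilon e_6$, $[e_3,e_4]=e_5$, so $L^2=Fe_5+Fe_6$ and both $e_5$ and $e_6$ are central (neither appears as a left factor in any nonzero bracket). Hence $L^3=[L^2,L]=0$, and $L_{6,22}(\epsilon)$ is a CB-algebra for exactly the same trivial reason as the other five --- it is nilpotent of class at most $2$ --- which is precisely what the paper says. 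Your proposed route is not merely wasteful but self-undermining as written: if $L^3$ really were $Fe_6\neq 0$, then Theorem \ref{nilp} would force the characteristic to be $3$ for the algebra to be CB, contradicting the proposition's unconditional claim; your parenthetical about ``the Jacobi relations among $e_5,e_6$ being vacuous'' is an attempt to paper over a tension that evaporates once $L^3$ is computed correctly. Replace that paragraph with the observation that $e_5,e_6\in Z(L)$ and the proof is complete and identical to the paper's.
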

\begin{proof}  $L_{6,1}, L_{6,2}, L_{6,4}, L_{6,8}, L_{6,22}(\epsilon)$ and $L_{6,26}$ all have $L^3=0$.
\par

$L_{6,3}, L_{6,5}, L_{6,6}, L_{6,7}$ and $L_{6,9}$ are not CB-Lie algebras because the corresponding $L_{5,k}$ isn't one.
\par

$L_{6,10}, L_{6,11}, L_{6,12}, L_{6,13}, L_{6,14}, L_{6,15}, L_{6,16}, L_{6,17}, L_{6,18}, L_{6,21}(\epsilon), L_{6,23}, L_{6,24}(\epsilon)$ and $L_{6,25}$ all have $[e_1,[e_1,e_2]]\neq 0$ and so are not CB-Lie algebras.
\par

$L_{6,19}$ and $L_{6,20}$ have $[[e_1,e_2],e_2]\neq 0$ and so are not CB-Lie algebras.
\end{proof}
\medskip

Note that the above results confirm that the seven-dimensional Lie algebra given in the Example \ref{seven} is the smallest nilpotent CB-Lie algebra $L$ with $L^3\neq 0$.
\section{Leibniz algebras} \label{sec 6}
An algebra $L$ over a field $F$ is called a {\em Leibniz algebra} if, for every $x,y,z \in L$, we have
\[  [x,[y,z]]=[[x,y],z]-[[x,z],y].
\]
In other words the right multiplication operator $R_x : L \rightarrow L : y\mapsto [y,x]$ is a derivation of $L$. As a result such algebras are sometimes called {\it right} Leibniz algebras, and there is a corresponding notion of {\it left} Leibniz algebras, which satisfy
\[  [x,[y,z]]=[[x,y],z]+[y,[x,z]].
\]
Clearly the opposite of a right (left) Leibniz algebra is a left (right) Leibniz algebra, so, in most situations, it doesn't matter which definition we use.
\par
 
Every Lie algebra is a Leibniz algebra and every Leibniz algebra satisfying $[x,x]=0$ for every element is a Lie algebra. The usual definitions for subalgebra, right (left) ideal, ideal, homomorphism apply for Leibniz algebras. Put $I=\langle\{x^2:x\in L\}\rangle$. Then 
\begin{align} [y,x^2]=&[[y,x],x]-[[y,x],x]=0 \hbox{ and } \nonumber \\
[x^2,y]=&[x,[x,y]]+[[x,y],x]=(x+[x,y])^2-x^2-[x,y]^2\in I, \nonumber
\end{align}
so $I$ is an ideal; in fact, $I$ is the smallest ideal of $L$ such that $L/I$ is a Lie algebra; $L/I$ is sometimes called the {\em liesation} of $L$.

In \cite{DS19} the authors asked whether Leibniz CL-algebras are solvable. For Leibniz algebras over a field $F$ of characteristic zero, using our previous results, we can answer this in the affirmative.

\begin{thm} Let $L$ be a Leibniz CL-algebra over a field of characteristic zero. Then $L$ is solvable.
\end{thm}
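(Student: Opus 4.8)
The plan is to show that a non-solvable Leibniz CL-algebra would necessarily contain a non-zero semisimple Lie subalgebra, and that such a subalgebra cannot be a CL-algebra, because by Theorem~\ref{nilp} every finite-dimensional CB-algebra over a field of characteristic different from two is nilpotent. So I would argue by contradiction: assume $L$ is a Leibniz CL-algebra over a field of characteristic zero which is \emph{not} solvable, and derive a contradiction.

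The first, routine, ingredient is that the class of Leibniz CL-algebras is closed under subalgebras. If $B$ is a subalgebra of $L$ and $x\in B$, then $C_B(x)=C_L(x)\cap B$; since $C_L(x)$ is a (two-sided) ideal of $L$ and $B$ is a subalgebra, one has $[B,C_B(x)]\subseteq C_L(x)\cap B=C_B(x)$ and likewise $[C_B(x),B]\subseteq C_B(x)$, so $C_B(x)\trianglelefteq B$. Hence $B$ is again a Leibniz CL-algebra.

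The second ingredient is Barnes' Levi theorem for Leibniz algebras over a field of characteristic zero: $L=S\,\dot{+}\,R$, where $R$ is the solvable radical of $L$ and $S$ is a semisimple Lie \emph{subalgebra} of $L$. As $L$ is not solvable, $S\neq 0$. By the previous paragraph $S$ is a Leibniz CL-algebra, and, being a Lie algebra, $S$ is anti-commutative; by the equivalence proved in Section~\ref{sec 3} (an anti-commutative algebra is a CL-algebra if and only if it is a CB-algebra), $S$ is a CB-algebra. Theorem~\ref{nilp}, which applies since the characteristic is different from two, then gives $S^4=0$, so $S$ is nilpotent and in particular solvable. But a non-zero semisimple Lie algebra is perfect ($S=[S,S]$) and hence not solvable, a contradiction. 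Therefore $S=0$, i.e.\ $L=R$ is solvable.

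The genuinely external input is Barnes' Levi decomposition; everything else is the elementary subalgebra observation together with the structural results of Sections~\ref{sec 3} and~\ref{sec 4}, in particular Theorem~\ref{nilp}. The point that deserves care is that the semisimple part must be produced as an honest Lie \emph{subalgebra} of $L$, not merely as the quotient $L/R$, since only then does subalgebra-closure of the CL-property let us pass to the anti-commutative theory; this is exactly what Barnes' theorem supplies, and it is where the hypothesis of characteristic zero enters essentially. I expect this reduction, rather than any computation, to be the main obstacle.
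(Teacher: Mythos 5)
Your proposal is correct and follows essentially the same route as the paper: apply Barnes' Levi decomposition $L=R\,\dot{+}\,S$, observe that the semisimple Lie subalgebra $S$ inherits the CL-property and is therefore a CB-algebra, and invoke Theorem~\ref{nilp} to force $S$ to be nilpotent, hence zero. The only difference is that you spell out the (correct) subalgebra-closure argument for the CL-property, which the paper leaves implicit.
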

\begin{proof} By Levi's Theorem (see \cite{barnes}) we have $L=R\dot{+} S$ where $R$ is the radical and $S$ is a semisimple Lie subalgebra of $L$. Now $S$ is a Lie CB-algebra and so is nilpotent, by Theorem \ref{nilp}. It follows that $S=0$, whence the result.
\end{proof}

Over more general fields we have the following results.

\begin{thm} If $L$ is a $\phi$-free Leibniz CL-algebra over any field then $L^{(3)}=0$.
\end{thm}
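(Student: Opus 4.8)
The strategy is to use the Frattini theory for Leibniz algebras together with the earlier structural results on CB-algebras. First I would invoke Levi's Theorem as in the previous proof to write $L = R\dot{+}S$ with $R$ the radical and $S$ a semisimple Lie subalgebra; since $S$ is a Lie CB-algebra it is nilpotent by Theorem \ref{nilp}, hence $S=0$ and $L=R$ is solvable. (If one wants to avoid characteristic-zero hypotheses here, one must instead argue directly from solvability results for $\phi$-free Leibniz algebras; the cleanest route is to note that a $\phi$-free Leibniz CL-algebra has no simple Lie algebra as a quotient, because a nonabelian simple Lie algebra is not a CB-algebra, and CB-algebras are closed under quotients.) So the real content is to bound the derived length of the solvable radical.

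The key step is to analyze $L^2$. The point of the $\phi$-free hypothesis is the standard fact (Barnes' theory) that in a $\phi$-free Leibniz algebra $L$ one has $L = A\dot{+} B$ where $A$ is an abelian ideal, equal to the \emph{nilradical} or at least containing $L^2$-type data, and $B$ is a subalgebra; more precisely, for $\phi$-free solvable Leibniz algebras the nilradical $N$ coincides with the "asocle" and $L/N$ acts faithfully and completely reducibly on $N$. I would first reduce to $L$ solvable (above), then use $\phi$-freeness to get that the nilradical $N$ is abelian and $L = N \dot{+} H$ with $H$ acting completely reducibly on $N$. Now I want to show $L^{(3)}=0$, i.e. $(L^{(2)})^2=0$. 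Since $L^{(1)}=L^2 \subseteq N$ already forces a lot (as $N$ is abelian, $L^{(2)} = (L^2)^2 \subseteq N^2 = 0$?) — wait, that would give $L^{(2)}=0$ outright, which is too strong, so the nilradical cannot in general be abelian; the correct statement is that the \emph{Frattini-free} condition makes the nilradical abelian only under extra hypotheses. The honest approach: show $L^2$ itself is a CB-algebra (closure under subalgebras), hence by Theorem \ref{nilp} applied to $L^2$ one gets $(L^2)^4 = 0$ and $(L^2)^{(2)} = 0$; combined with a codimension argument or with the $\phi$-free structure theorem forcing $L/L^2$ to act faithfully, one squeezes the derived length down to give $L^{(3)}=0$.

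More concretely, here is the route I would carry out. Since $L$ is a CL-algebra, so is every subalgebra and every quotient (the CB-property transfers, as established in section \ref{sec 3} for the anti-commutative case; for Leibniz algebras one checks the analogous closure directly from the definition of centralizer). Apply this to the liesation $L/I$: it is a Lie CL-algebra, hence a Lie CB-algebra, hence by Theorem \ref{nilp} satisfies $(L/I)^3 = 0$. Thus $L^3 \subseteq I$. Separately, $I$ is the ideal generated by squares; in a $\phi$-free Leibniz algebra one knows $I$ is abelian (this is where $\phi$-freeness enters — the "square ideal" of a $\phi$-free Leibniz algebra is abelian, a result from the Frattini theory of Leibniz algebras). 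Then $L^{(2)} = (L^2)^2$; since $L^3 \subseteq I$ and $I$ is abelian, and $L^2/(L^2\cap I)$ embeds in the liesation where the derived series terminates quickly, one concludes $L^{(2)} \subseteq I$ and then $L^{(3)} = (L^{(2)})^2 \subseteq I^2 = 0$.

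**Main obstacle.** The crux — and the step I expect to be most delicate — is pinning down exactly which Frattini-theoretic input about $\phi$-free Leibniz algebras is needed and citing it correctly: specifically the claim that the square ideal $I$ (or the nilradical) of a $\phi$-free Leibniz algebra is abelian, and that $L/I$ retains enough structure. The Lie-algebra analogue (in a $\phi$-free Lie algebra the nilradical is abelian and equals its own centralizer) is classical, but the Leibniz version requires care about left versus right ideals and about the behavior of $I$. If that structural fact is available as cited, the rest is the short chain of inclusions above using Theorem \ref{nilp} on the liesation; if it is not cleanly available, one would instead have to run the codimension-type argument of Proposition \ref{codim2} inside $L^2$, which is messier but self-contained.
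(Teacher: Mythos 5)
There is a genuine gap, and it sits exactly where you flagged your uncertainty: you never justify that the CL-property passes to the quotient $L/I$. For anti-commutative algebras the paper proves CL-closure under quotients via the equivalence with anti-associativity, but for a general Leibniz algebra the centralizer of $\bar{x}$ in $L/I$ consists of classes of $y$ with $[x,y],[y,x]\in I$, which has no evident relation to $C_L(x)$, so quotient-closure of the CL-property is not available. This is precisely the point of the $\phi$-free hypothesis: by \cite[Lemma 7.2]{frat} a $\phi$-free algebra splits as $L=I\dot{+}B$ with $B$ a \emph{subalgebra}, and the CL-property does pass to subalgebras (since $C_B(x)=C_L(x)\cap B$ is an ideal of $B$ whenever $C_L(x)$ is an ideal of $L$). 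Since $B\cong L/I$ is a Lie algebra, it is a Lie CB-algebra, hence metabelian by Theorem \ref{nilp}, and then $L^{(2)}\subseteq I$ gives $L^{(3)}\subseteq I^2=0$. Note that if your quotient-closure claim were true, $\phi$-freeness would be superfluous, which should have been a warning sign.

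You also misattribute the role of $\phi$-freeness in the other direction: the abelianness of $I$ is \emph{not} where it enters. The computation $[y,x^2]=0$ at the start of Section \ref{sec 6} shows $[L,I]=0$ for every Leibniz algebra, so $I^2=0$ always. Two smaller points: the Levi-decomposition detour in your first paragraph is both unnecessary and unavailable over arbitrary fields, and your appeal to $(L/I)^3=0$ invokes the characteristic-$\neq 3$ clause of Theorem \ref{nilp} (it fails in characteristic $3$, cf.\ Example \ref{seven}); the correct tool is the metabelian conclusion $B^{(2)}=0$, which is all one needs.
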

\begin{proof} Since $L$ is $\phi$-free we have $L=I\dot{+} B$ where $B$ is a subalgebra of $L$, by \cite[Lemma 7.2]{frat}. Now $B$ is a Lie CB-algebra and so $B^{(2)}=0$, by Theorem \ref{nilp}. It follows that $L^{(3)}\subseteq I^2 =0$, as claimed.
\end{proof}

The following is proved in \cite[Lemma 1]{jp}.

\begin{lemma}\label{symm}
If a right Leibniz algebra $L$ is also a left Leibniz algebra, then, for all $x,y,z\in L$,
\begin{itemize}
\item[(i)] $[[x,y],z]+[z,[x,y]]=0$; and
\item[(ii)] $2[[x,y],[x,y]]=0$.
\end{itemize}
\end{lemma}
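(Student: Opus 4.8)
The plan is to play the two Leibniz identities against one another. Write the right Leibniz identity in the form
\[
[x,[y,z]]=[[x,y],z]-[[x,z],y]
\]
and the left Leibniz identity in the form
\[
[x,[y,z]]=[[x,y],z]+[y,[x,z]].
\]
Since $L$ is assumed to be both a right and a left Leibniz algebra, both hold for all $x,y,z\in L$, and their left-hand sides coincide. Subtracting the second from the first, the terms $[[x,y],z]$ cancel and one is left with $0=-[[x,z],y]-[y,[x,z]]$, that is,
\[
[[x,z],y]+[y,[x,z]]=0 \qquad \text{for all } x,y,z\in L.
\]

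This identity already contains part (i): interchanging the names of the second and third arguments (i.e.\ swapping the roles of $y$ and $z$, which is legitimate since the identity is universally quantified) turns it into $[[x,y],z]+[z,[x,y]]=0$. For part (ii) I would then specialize (i) to the case $z=[x,y]$, which immediately gives $[[x,y],[x,y]]+[[x,y],[x,y]]=0$, i.e.\ $2[[x,y],[x,y]]=0$.

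There is essentially no obstacle in this argument --- it is a single subtraction followed by a relabelling and a substitution. The only points demanding care are purely bookkeeping ones: fixing conventions so that the ``right'' and ``left'' identities are written consistently, getting the sign in the subtraction right, and tracking the variable names when passing to the symmetric form (i). One may also note that (ii) is vacuous when the characteristic of $F$ is $2$ and amounts to $[[x,y],[x,y]]=0$ otherwise.
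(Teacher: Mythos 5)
Your argument is correct: subtracting the left Leibniz identity from the right one cancels the common term $[[x,y],z]$ and yields $[[x,z],y]+[y,[x,z]]=0$, which is (i) after relabelling, and (ii) follows by setting $z=[x,y]$. The paper itself gives no proof here --- it cites Lemma~1 of Jibladze--Pirashvili --- and your derivation is exactly the standard one-line argument underlying that reference, so nothing further is needed.
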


\begin{defn} We call $L$ a symmetric Leibniz algebra if it is both a right and left Leibniz algebra and $[[x,y],[x,y]]=0$. Note that if $F$ has characteristic different from two the added identity is not needed because of Lemma \ref{symm}(ii).
\end{defn}

\begin{prop}\label{symmder} If $L$ is a symmetric Leibniz algebra over any field then $L^2$ is a Lie algebra.
\end{prop}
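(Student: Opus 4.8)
The plan is to show that $L^{2}=[L,L]$ is a subalgebra of $L$ on which every element squares to zero; since a Leibniz algebra in which all squares vanish is a Lie algebra (as noted at the start of Section \ref{sec 6}), this immediately gives the result.

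First I would observe that $L^{2}$ is a subalgebra: plainly $[L^{2},L^{2}]\subseteq[L,L]=L^{2}$, so $L^{2}$ is closed under the bracket, and a subalgebra of a right Leibniz algebra is again a right Leibniz algebra. The substance of the argument is then to prove $[a,a]=0$ for every $a\in L^{2}$, not merely for the generators $[x,y]$. Here the symmetry hypothesis is used twice: directly via $[[x,y],[x,y]]=0$, and through Lemma \ref{symm}(i), which says $[[x,y],z]+[z,[x,y]]=0$ for all $x,y,z\in L$ and hence that every element of $L^{2}$ anti-commutes with every element of $L$ (in particular with every element of $L^{2}$).

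So I would take $a\in L^{2}$, write $a=\sum_{i=1}^{n}[x_{i},y_{i}]$, and expand $[a,a]=\sum_{i,j}[[x_{i},y_{i}],[x_{j},y_{j}]]$ by bilinearity. The diagonal terms $[[x_{i},y_{i}],[x_{i},y_{i}]]$ vanish because $L$ is symmetric, and for $i\neq j$ the contributions cancel in pairs since $[[x_{i},y_{i}],[x_{j},y_{j}]]+[[x_{j},y_{j}],[x_{i},y_{i}]]=0$ by Lemma \ref{symm}(i). Hence $[a,a]=0$. Consequently $L^{2}$ is a right Leibniz algebra with $[a,a]=0$ for all $a$, so $L^{2}$ is a Lie algebra.

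I do not expect a serious obstacle. The only point that needs care is precisely the passage from the identity $[[x,y],[x,y]]=0$, which is imposed only on bracket elements, to the statement $[a,a]=0$ for arbitrary $a\in L^{2}$; this is handled by the bilinear expansion above together with Lemma \ref{symm}(i) to kill the cross terms.
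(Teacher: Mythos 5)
Your proof is correct and is essentially the paper's argument with the details written out: the paper's one-line proof appeals to Lemma \ref{symm} and the remark that a Leibniz algebra with all squares zero is a Lie algebra, which is exactly what you use. The bilinear expansion of $[a,a]$, with the symmetry identity killing the diagonal terms and Lemma \ref{symm}(i) cancelling the cross terms, is the right way to pass from generators to arbitrary elements of $L^2$.
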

\begin{proof} This follows easily from Lemma \ref{symm} and remarks made at the beginning of this section.
\end{proof}

\begin{cor}  If $L$ is a symmetric Leibniz CL-algebra over any field then $L^{(3)}=0$. 
\end{cor}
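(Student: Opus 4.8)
The plan is to reduce everything to the square $L^{2}$: I will show that $L^{2}$ is at once a Lie algebra and a CL-algebra, and then apply the metabelian conclusion of Theorem~\ref{nilp} to it. Concretely, $L^{2}$ is already known to be a Lie algebra by Proposition~\ref{symmder}, so the only thing to add is that it inherits the CL-property from $L$, after which the classification results of Section~\ref{sec 4} finish the job.

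The one auxiliary fact needed is that a subalgebra $S$ of any CL-algebra $A$ is again a CL-algebra. Indeed, for $x\in S$ one has $C_{S}(x)=C_{A}(x)\cap S$, and since $C_{A}(x)$ is an ideal of $A$ we get $(C_{A}(x)\cap S)\,S\subseteq C_{A}(x)\,S\cap S\,S\subseteq C_{A}(x)\cap S$, and likewise $S\,(C_{A}(x)\cap S)\subseteq C_{A}(x)\cap S$; hence $C_{S}(x)$ is an ideal of $S$. This is precisely the step already used tacitly in the proofs of the two preceding theorems, where a Levi factor (respectively a Frattini complement) of a Leibniz CL-algebra is treated as a Lie CB-algebra.

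Now carry this out for $S=L^{2}$. Since $L^{2}$ is an ideal of $L$, it is in particular a subalgebra, hence a CL-algebra by the previous paragraph; and by Proposition~\ref{symmder} it is a Lie algebra, so it is anti-commutative. As CL-algebras and CB-algebras coincide in the anti-commutative setting, $L^{2}$ is a Lie CB-algebra, and Theorem~\ref{nilp} gives $(L^{2})^{(2)}=0$. Since $(L^{2})^{(2)}=[\,[L^{2},L^{2}],[L^{2},L^{2}]\,]=L^{(3)}$, this says exactly that $L^{(3)}=0$.

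I do not anticipate any genuine obstacle: the mathematical content sits entirely in Proposition~\ref{symmder} (already proved) and in the metabelian clause of Theorem~\ref{nilp}, and the only point that needs a sentence of justification is that the subalgebra $L^{2}$ inherits the CL-property, which is routine. The mild subtlety is purely bookkeeping—checking that the derived-series indices line up, i.e.\ that $(L^{2})^{(2)}$ really is $L^{(3)}$—and this is immediate from the definition $A^{(k)}=A^{(k-1)}A^{(k-1)}$ with $A^{(1)}=A^{2}$.
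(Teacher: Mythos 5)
Your proof is correct and follows essentially the same route as the paper: Proposition~\ref{symmder} makes $L^{2}$ a Lie algebra, it inherits the CL/CB property as a subalgebra, and the metabelian clause of Theorem~\ref{nilp} gives $(L^{2})^{(2)}=L^{(3)}=0$. The only difference is that you spell out the subalgebra-inheritance step and the derived-series bookkeeping, which the paper leaves implicit.
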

\begin{proof} It follows from Proposition \ref{symmder} that $L^2$ is a Lie CB-algebra, so $L^{(3)}=(L^{(1)})^{(2)}=0$, by Theorem \ref{nilp}.
\end{proof}

\section{Group actions on CB-algebras} \label{sec 7}
In this section, we study a finite group action on algebras and show that CB-elements is preserved under the group action.
\begin{defn}\label{definition-group-action}
Let $A$ be an algebra and $G$ be a finite group. We say the group $G$ is  acting on $A$  if there exists a function 
$$\phi : G\times A \rightarrow A,~~ (g, x) \mapsto \phi (g, x) = gx$$ satisfying the following conditions.
\begin{enumerate}
\item For each $g\in G$ the map $x\mapsto gx,$ denoted by $\psi_g$ is linear.
\item $ex= x$ for all $x \in A$, where $e \in G$ is the group identity.
\item $g_1(g_2x) = (g_1g_2)x$ for all $g_1, g_2 \in G$ and $x \in A$.
\item $g(x y) = (gx) (gy)$ for all $g\in G$ and $x, y \in A.$
\end{enumerate}  
\end{defn}

The above definition of group action can be equivalently stated as follows:
\begin{prop}
A finite group $G$ acts on  $A$ if and only if there exists a group homomorphism 
$$\psi : G \rightarrow \text{Aut} (A),~~g \mapsto \psi(g)=\psi_g,$$ from the group $G$ to the automorphism group of $A$, where $\psi_g (x) = gx$ is the left translation by $g.$  
\end{prop}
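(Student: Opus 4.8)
The plan is to prove the two implications by simply unwinding the definitions, the key observation being that conditions (1) and (4) of Definition \ref{definition-group-action} together say exactly that each left translation $\psi_g\colon x\mapsto gx$ is an algebra endomorphism of $A$, while conditions (2) and (3) upgrade this to an automorphism and make $g\mapsto\psi_g$ a group homomorphism.

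First I would treat the forward direction. Assuming that $G$ acts on $A$, I set $\psi_g(x)=gx$. Condition (1) gives linearity of $\psi_g$, and condition (4) gives $\psi_g(xy)=\psi_g(x)\psi_g(y)$, so $\psi_g$ is an algebra endomorphism of $A$. Conditions (2) and (3) yield $\psi_e=\Id_A$ and $\psi_{g_1}\circ\psi_{g_2}=\psi_{g_1g_2}$ for all $g_1,g_2\in G$; specializing the latter to $g_2=g_1^{-1}$ shows $\psi_{g_1}\circ\psi_{g_1^{-1}}=\psi_{g_1^{-1}}\circ\psi_{g_1}=\Id_A$, so each $\psi_g$ is bijective and hence lies in $\text{Aut}(A)$. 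The identity $\psi_{g_1g_2}=\psi_{g_1}\circ\psi_{g_2}$ then says precisely that $\psi\colon G\to\text{Aut}(A)$ is a group homomorphism. Conversely, given such a homomorphism $\psi$, I would define $gx:=\psi_g(x)$ and read off the four conditions: (1) holds because each $\psi_g$ is linear; (2) holds because a group homomorphism sends the identity to $\Id_A$; (3) is the statement that $\psi_{g_1g_2}=\psi_{g_1}\circ\psi_{g_2}$ evaluated at $x$; and (4) holds because each $\psi_g$ respects the multiplication of $A$, being an automorphism.

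There is no genuine obstacle in this argument; the only step requiring a moment's attention is noticing that, in the forward direction, one must verify that each $\psi_g$ is an automorphism rather than merely an endomorphism, and this is forced by combining condition (3) with the existence of inverses in $G$.
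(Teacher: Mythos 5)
Your proof is correct; it is the standard unwinding of Definition \ref{definition-group-action}, with the one necessary observation (that condition (3) plus inverses in $G$ forces each $\psi_g$ to be bijective) handled properly. The paper states this proposition without proof, so there is nothing to compare against; your argument is exactly the one the authors are implicitly relying on.
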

\begin{remark}
Let $G$ be a finite group and ${F}[G]$ be the associated group algebra. If $G$ acts on a algebra $A$  then $A$ may be viewed as a ${F}[G]$-module.
\end{remark}

\begin{thm}
Let $x\in A$ be a CB-element. Then $x$ is preserved under the action of $G$, that is, $gx$ is also a CB-element for all $g \in G$.
\end{thm}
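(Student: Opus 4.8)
The plan is to translate the CB-element condition for $x$ directly into one for $gx$, using only the fact that each $\psi_g$ is an algebra automorphism of $A$. Recall that $x$ being a CB-element means $(ax)b=0$ whenever $a\in A$ and $b\in C_A(a)$; equivalently, by Lemma \ref{CB-element and anti-ass}, it means $(ax)b=-a(xb)$ for all $a,b\in A$, or by Lemma \ref{CB-element and cent}, that $zy\mapsto$ stays in the appropriate centralizer. I would work with the original definition: fix $g\in G$ and take arbitrary $a\in A$ and $b\in C_A(a)$; the goal is to show $\big((a\,(gx)\big)b=0$.

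The key step is to pull everything back along $\psi_{g^{-1}}$. Write $a=g a'$ and $b=g b'$ where $a'=\psi_{g^{-1}}(a)$ and $b'=\psi_{g^{-1}}(b)$; this is possible since $\psi_g$ is bijective. First I would check that $b'\in C_A(a')$: from $ab=0$ we get $g(a'b')=(ga')(gb')=ab=0$, and applying the injective map $\psi_{g^{-1}}$ gives $a'b'=0$, so indeed $b'\in C_A(a')$. Then, using that $\psi_g$ preserves products,
\begin{align*}
\big(a\,(gx)\big)b &= \big((ga')(gx)\big)(gb') = \big(g(a'x)\big)(gb') = g\big((a'x)b'\big).
\end{align*}
Since $x$ is a CB-element, $a'\in A$, and $b'\in C_A(a')$, we have $(a'x)b'=0$, hence $\big(a\,(gx)\big)b = g(0)=0$. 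As $a$ and $b\in C_A(a)$ were arbitrary, $gx$ is a CB-element.

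Honestly, there is no real obstacle here: the statement is essentially the observation that the CB-property is expressed by identities/membership conditions that are manifestly preserved by any algebra automorphism, and it is a special case (for the particular automorphisms $\psi_g$) of the earlier proposition stating that a homomorphism of anti-commutative algebras sends CB-elements of $A_1$ to CB-elements of $\phi(A_1)$. The only mild point worth spelling out is the bijectivity of $\psi_g$ (it lies in $\mathrm{Aut}(A)$, so this is immediate) which is what lets us realize an arbitrary $a\in A$ and $b\in C_A(a)$ as images under $\psi_g$; without surjectivity one would only control $C_{\psi_g(A)}(\cdot)$ rather than all of $C_A(\cdot)$. With that in hand the proof is the three-line computation above.
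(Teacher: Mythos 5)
Your proof is correct and is essentially identical to the paper's: both pull $a$ and $b$ back along $\psi_{g^{-1}}$, verify that $g^{-1}b\in C_A(g^{-1}a)$ using that $\psi_{g^{-1}}$ preserves products, and then push the identity $((g^{-1}a)x)(g^{-1}b)=0$ forward by $\psi_g$. Your additional remark that this is a special case of the earlier proposition on homomorphisms (applied to the automorphism $\psi_g$, whose surjectivity gives control of all of $C_A(\cdot)$) is accurate but does not change the substance of the argument.
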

\begin{proof}
Let $A$ be equipped with an action of a group $G$. Let $z \in A$ be a CB-element. For all $x \in A,~ y\in C_A(x), ~ g\in G$, we have
\begin{align} \label{CB-element preserve}
(x (gz)) y = g (((g^{-1}x) z) g^{-1}y) =0.
\end{align}
Observe that in the above computation, we have used the fact that $g^{-1}y \in C_A(g^{-1}x)$ as $(g^{-1}x) (g^{-1}y) = g^{-1}(x y) =g^{-1}0=0$. Equation (\ref{CB-element preserve}) shows $gz$ are also CB-elements for all $g \in G$. Therefore, CB-elements are preserved under the group action.
\end{proof}

Suppose the given algebra $A$ is equipped with an action of a finite group $G$. Let $x \in A$, we define an orbit of $x$ under the action of $G$ as follows:
$$G(x) = \lbrace gx \mid g \in G \rbrace \subseteq A.$$
It is easy to check orbits $G(x)$ and $G(y)$ of any two points $x, y \in A$ are either equal or disjoint. Note that if $x \in A$ is a CB-element then all the elements in the corresponding orbit $G(x)$ are also CB-elements. Let $S$ be the collection of all CB-elements of $A$.
\begin{prop}
The set $B = \bigcup_{x \in S} G(x)$ is a CB-subalgebra of $A$.
\end{prop}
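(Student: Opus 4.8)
The plan is to recognize that $B$ is nothing other than the subalgebra $K$ of all CB-elements, so that no fresh work on closure under the product is needed. First I would note that, by its very definition, $S$ is exactly the set of CB-elements of $A$; that is, $S=K$ in the notation of the Proposition above. By that Proposition, $K$ is a subalgebra of $A$ and is itself a CB-algebra.

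Next I would invoke the Theorem of this section: if $z$ is a CB-element, then $gz$ is a CB-element for every $g\in G$. Applying this to each $x\in S$ shows that the whole orbit $G(x)=\{gx:g\in G\}$ is contained in $S$. Taking the union over all $x\in S$ then gives $B=\bigcup_{x\in S}G(x)\subseteq S$. For the reverse inclusion, since $e\in G$ acts as the identity, we have $x=ex\in G(x)\subseteq B$ for every $x\in S$, so $S\subseteq B$. Hence $B=S=K$.

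Finally, since $B=K$ and $K$ is a subalgebra of $A$ that is a CB-algebra, $B$ is a CB-subalgebra of $A$, as claimed. I would also add the remark that $B$ is manifestly $G$-invariant, being a union of $G$-orbits, so $B=K$ is automatically stable under the given group action.

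The only point that requires any care — and it is the sole ``obstacle'' in the argument — is the identification $S=K$: the union of orbits $B$ does not obviously satisfy $BB\subseteq B$ when one looks at it extrinsically, and all the substance of the proof lies in observing that this set coincides with the subalgebra $K$ furnished by the earlier Proposition, after which the Theorem of this section only supplies the $G$-invariance.
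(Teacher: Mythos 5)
Your proof is correct, and it takes a cleaner route than the paper's. The paper verifies closure of $B$ under multiplication directly, via the computation $(g_1x)(g_2y)=g_1\bigl(x(g_1^{-1}g_2y)\bigr)$, and then asserts that this lies in $B$ --- an assertion which, if one unpacks it, still needs both the Theorem of this section (to see that $g_1^{-1}g_2y$ is a CB-element) and the earlier Proposition that the set $K$ of CB-elements is a subalgebra (to see that $x(g_1^{-1}g_2y)$ is again a CB-element, so that its $G$-orbit is contained in $B$). You instead observe that the Theorem forces each orbit $G(x)$, $x\in S$, to lie inside $S$, while $ex=x$ gives the reverse inclusion, so that $B=S=K$ outright; the result then follows immediately from the earlier Proposition. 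This makes the logical dependence on the two prior results explicit rather than implicit, and it exposes the real content of the statement, namely that the union of orbits of CB-elements is nothing new --- it is exactly the CB-subalgebra $K$, which is moreover $G$-invariant. The only caution is that your argument leans on $K$ being a \emph{subspace} as well as being closed under products, but that is already asserted (and is immediate from linearity of the defining condition $(xz)y=0$ in $z$) in the earlier Proposition, so there is no gap.
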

\begin{proof}
Note that $B$ is non-empty as $0\in B$. It is clear from the construction of $B$ that every elements of $B$ are CB-elements. We only need to show that $B$ is a subalgebra of $A$. Let $b_1, b_2 \in B$. Then $b_1 = g_1 x$ and $b_2 = g_2 y$ for some $g_1, g_2 \in G$ and $x, y\in A$. Observe that
$$ab=(g_1 x)(g_2 y) = g_1 (x (g_1^{-1}g_2 y)) \in B.$$
As the group action is linear, this proves $B$ is a subalgebra of $A$. Therefore, $B$ is a CB-algebra.
\end{proof}
\mbox{ }\\

\providecommand{\bysame}{\leavevmode\hbox to3em{\hrulefill}\thinspace}
\providecommand{\MR}{\relax\ifhmode\unskip\space\fi MR }
\providecommand{\MRhref}[2]{%
  \href{http://www.ams.org/mathscinet-getitem?mr=#1}{#2}
}
\providecommand{\href}[2]{#2}

\mbox{ } \\
\end{document}